\documentclass[11pt]{amsart}
\usepackage{amsmath,amssymb,amsthm}
\usepackage[margin=1.15in]{geometry}
\usepackage{microtype}

\newtheorem{theorem}{Theorem}[section]
\newtheorem{lemma}[theorem]{Lemma}
\newtheorem{proposition}[theorem]{Proposition}
\newtheorem{corollary}[theorem]{Corollary}
\theoremstyle{definition}
\newtheorem{definition}[theorem]{Definition}
\newtheorem{example}[theorem]{Example}
\theoremstyle{remark}
\newtheorem{remark}[theorem]{Remark}
\newtheorem{question}[theorem]{Question}

\newcommand{\N}{\mathbb{N}}
\newcommand{\Z}{\mathbb{Z}}
\newcommand{\C}{\mathbb{C}}

\newcommand{\Prob}{\mathbb{P}}
\newcommand{\supp}{\operatorname{supp}}

\begin{document}

\title[Directional growth of lamplighter groups]{A phase transition in the directional growth\\ of lamplighter groups}

\author{Mohammad F. Marashdeh}
\address{Department of Mathematics, Mutah University, Karak, Jordan}
\email{marashdeh@mutah.edu.jo}

\begin{abstract}
Let $G$ be a finitely generated group with a surjective homomorphism $\pi\colon G\to\Z$. The directional growth spectrum $I(\beta)$ is the exponential rate of the number of elements of length at most $n$ lying over $\lfloor\beta n\rfloor$; its maximum is the growth rate. Wherever $I$ has been computed---free, hyperbolic and relatively hyperbolic groups, free abelian groups---it is strictly concave and real-analytic, as Perron--Frobenius theory dictates. We compute $I$ in closed form for the lamplighter groups $F\wr\Z$, $|F|=r+1$, with the standard generators, and obtain a different picture: $I$ is affine on $[-\beta^{*},\beta^{*}]$ and strictly concave beyond it, with a second-order transition at $\beta^{*}$. Elements conditioned to the affine phase backtrack macroscopically, with lamp density independent of $\beta$. The series $\sum_x s^{|x|}y^{\pi(x)}$ is rational, and the transition is an exchange of dominant singularities, the inner one independent of $y$. The peak of $I$ is $\log\omega$ with $\omega^{2}=\omega+r$, the central value $\log\rho$ with $\rho^{3}=\rho+r$; the base group is therefore co-amenable yet grows strictly slower, by an amount unbounded in $r$. For $r=1$ these constants are the golden ratio and the plastic number.
\end{abstract}

\subjclass[2020]{Primary 20F65; Secondary 20F69, 20E22, 05A16, 60F10}
\keywords{Growth of groups, lamplighter group, wreath product, relative growth, growth series, large deviations, phase transition}

\maketitle

\section{Introduction}\label{sec:intro}

For a finitely generated group $G$ with finite generating set $S$, the growth function $V(n)$ counts the ball of radius $n$, and its exponential rate $\omega=\lim V(n)^{1/n}$ is a fundamental invariant \cite{Gromov81,Grigorchuk84,dlH00}. This invariant does not capture the distribution of elements inside the ball. If $G$ admits a surjective homomorphism $\pi\colon G\to\Z$, the ball decomposes into fibers over $\pi$, and one may ask how many elements of length at most $n$ have height $\lfloor\beta n\rfloor$. The exponential rate of that count, as a function of $\beta$, is a refinement of the growth rate, which it recovers as its maximum.

This directional counting problem has been solved in several settings, all of them nonpositively curved or abelian. For free groups it was settled by Sharp \cite{Sharp01} by symbolic dynamics, following Rivin's central limit theorem \cite{Rivin10}; for relatively hyperbolic groups and graph products by Gekhtman, Taylor and Tiozzo \cite{GTT20}; and for free abelian groups by Duchin, Leli\`evre and Mooney \cite{DLM12}. In every one of these cases the answer is a strictly concave, real-analytic function of $\beta$. This is not an accident of the examples: such counting problems are governed by transfer operators, and the analyticity and strict concavity of the resulting rate function are exactly what Perron--Frobenius theory delivers.

For amenable groups of exponential growth no such computation appears in the literature, and the transfer-operator machinery is unavailable. We carry the computation out for the lamplighter groups, in closed form and for every finite lamp group, by elementary means: word length in these groups obeys an exact formula, which turns fiber counting into a two-parameter binomial sum whose Laplace asymptotics can be solved explicitly. The answer is qualitatively different from the curved cases. The spectrum is affine on a symmetric interval about the origin and strictly concave outside it, with a genuine second-order phase transition at the boundary; the affine phase reflects a regime in which optimal elements overshoot their target and return, and it has no counterpart in the settings listed above. Along the way the two critical exponents appear as roots of a cubic and a quadratic; the growth series refined by $\pi$ is computed in closed rational form, which locates the transition in its singularities; and the fiber over the identity is seen to grow strictly slower than the group, which bears on the growth-gap criteria for co-amenable subgroups.

\begin{definition}\label{def:spectrum}
Let $G$ be generated by the finite symmetric set $S$, let $\pi\colon G\to\Z$ be a surjective homomorphism with $\pi(S)\subset\{-1,0,1\}$, and for $j\in\Z$, $n\in\N$ set
\[
N_j(n)\;=\;\#\{x\in G:\ |x|\le n,\ \pi(x)=j\}.
\]
The \emph{directional growth spectrum} of $(G,S,\pi)$ is
$I(\beta)=\lim_{n\to\infty}\tfrac1n\log N_{\lfloor\beta n\rfloor}(n)$, $\beta\in[-1,1]$, whenever the limit exists.
\end{definition}

Since $\pi$ is $1$-Lipschitz, $N_j(n)=0$ for $|j|>n$, and $\sum_{|j|\le n}N_j(n)=V(n)$; hence $\sup_\beta I(\beta)=\log\omega(G,S)$ whenever $I$ exists and is continuous.

Fix a finite group $F$ with $|F|=r+1\ge2$ and let $L_F=F\wr\Z$ carry the standard generating set $S=\{t^{\pm1}\}\cup(F\setminus\{e\})$: the generator $t$ moves the cursor, and each $c\in F\setminus\{e\}$ multiplies the lamp at the cursor by $c$. Let $\pi$ be the cursor homomorphism, whose kernel is the base group $B_F=\bigoplus_{\Z}F$. Set $h(u)=-u\log u-(1-u)\log(1-u)+u\log r$ for $u\in[0,1]$ (natural logarithms, $h(0)=0$, $h(1)=\log r$), and let
\[
\rho=\rho_r>1 \text{ solve } z^3=z+r,
\qquad
\omega=\omega_r=\tfrac{1+\sqrt{1+4r}}2 \text{ solve } z^2=z+r .
\]
Put $u_r=r/(r+\rho)\in(0,1)$ and
\[
\beta^{*}=\frac{1}{1+u_r}=\frac{\rho^{3}}{\rho^{3}+r},
\qquad
\bar\beta=\frac{\omega^{2}}{\omega^{2}+r}.
\]
The notation $\omega$ is consistent with its use above: Corollary \ref{cor:growth} identifies $\omega_r$ with the growth rate of $L_F$.

\begin{theorem}[Spectrum]\label{thm:spectrum}
For every $\beta\in[-1,1]$ and every sequence of integers $j_n$ with $j_n/n\to\beta$, the limit
$I(\beta)=\lim_n\frac1n\log N_{j_n}(n)$ exists, is even and concave, and equals
\[
I(\beta)\;=\;
\begin{cases}
(1+|\beta|)\,\log\rho, & |\beta|\le\beta^{*},\\[2pt]
|\beta|\;h\!\Big(\dfrac{1-|\beta|}{|\beta|}\Big), & \beta^{*}\le|\beta|\le1 .
\end{cases}
\]
The same formula holds for the sphere-fiber counts $N_j(n)-N_j(n-1)$.
\end{theorem}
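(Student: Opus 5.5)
The plan is to reduce $N_j(n)$ to an explicit binomial sum via the exact word-length formula, and then evaluate that sum by Laplace's method under a single linear constraint. By symmetry ($t\mapsto t^{-1}$ composed with reflecting the lamp configuration is an isometry that negates $\pi$) it suffices to treat $j\ge0$. Write $x\in L_F$ as a pair $(f,j)$, where $f\colon\Z\to F$ is finitely supported and $j=\pi(x)$.

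\textbf{Step 1: word length.} Put $L=\min(\{0\}\cup\supp f)$ and $R=\max(\{j\}\cup\supp f)$. The standard formula for the standard generators is
\[
|x| \;=\; \#\supp f \;+\; \min\{\,(R-L)+(-L)+(R-j),\ (R-L)+R+(j-L)\,\}.
\]
For $j\ge0$ the first expression (go left first) is the smaller one. Set the left overshoot $a=-L\ge0$ and the right overshoot $b=R-j\ge0$. Then $|x|=k+j+2(a+b)$, where $k=\#\supp f$ and the support lies in an interval of $m=j+a+b+1$ sites.

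\textbf{Step 2: reduction to a binomial sum.} Up to polynomial factors, $N_j(n)$ is comparable to
\[
\Sigma_j(n)\;=\;\sum_{c\ge0}\ \sum_{k+j+2c\le n}\binom{j+c}{k}r^k .
\]
The factor $c+1$ counts the splittings $a+b=c$. Requiring the extreme sites to be lit when $a$ or $b$ is positive changes only boundedly many binomial entries, hence only a polynomial factor. Both an upper bound (drop the extremality constraints) and a lower bound (restrict to configurations with lit endpoints) hold up to $\mathrm{poly}(n)$. Sphere counts correspond to the equality $k+j+2c=n$. This equality is always achievable, since $k$ has no parity restriction, so the same asymptotics will follow.

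\textbf{Step 3: Laplace asymptotics.} Change variables to $m=j+c\ge j$. The constraint becomes $k+2m\le n+j$, and the summand has exponential size $m\,h(k/m)$. Without the constraint $m\ge j$, the sum $\sum_{k+2m=N}\binom{m}{k}r^k$ is the coefficient of $z^N$ in $1/(1-z^2-rz^3)$. Its dominant singularity is $1/\rho$, where $\rho^3=\rho+r$, which gives rate $(n+j)\log\rho=(1+\beta)n\log\rho$.

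Equivalently, maximize $m\,h(k/m)$ over $k+2m=n+j$. Stationarity gives the optimal density $u=k/m=r/(r+\rho)=u_r$, hence $m=(n+j)/(2+u_r)$. This optimum is admissible exactly when $m\ge j$, that is, $(1+\beta)/(2+u_r)\ge\beta$, which is equivalent to $\beta\le 1/(1+u_r)=\beta^*$. That is the affine phase.

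For $\beta>\beta^*$ the objective is concave in $m$ along the constraint line, so the maximum sits on the boundary $m=j$, $c=0$, $k=n-j$. This gives $j\,h((n-j)/j)$, i.e. the rate $\beta h((1-\beta)/\beta)$. Continuity in $j_n/n\to\beta$ follows because the maximized functional is continuous in $(\beta,$ parameters$)$, and the sum has only polynomially many terms.

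\textbf{Step 4: concavity and the main obstacle.} Evenness is immediate from the symmetry. Concavity follows from the concavity of $(m,k)\mapsto m\,h(k/m)$ (a perspective of a concave function) under linear constraints in $(m,k,j)$. A direct check is also available: at $\beta^*$ the two pieces agree with matching first derivatives.

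The main obstacle I expect is Step 1 together with the bookkeeping in Step 2. One must state the length formula correctly in all cases ($j$ inside or outside the support hull, overshoots on either side), and verify that the "right first" tour never beats "left first" for $j\ge0$. One must also confirm that the boundary effects really are only polynomial, uniformly in $j$ near $0$ and near $n$, where $m$ or $k$ can be small. At the endpoint $\beta=1$ the term is degenerate, and there I would handle $h(0)=0$ separately.
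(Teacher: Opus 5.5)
Your route is the paper's: the exact length formula, the fiber count as a polynomial-size binomial sum, and a Laplace problem with the budget $k+2m\le n+j$ and the extra constraint $m\ge j$. The phase is decided by whether the free optimum at density $u_r$ satisfies $m\ge j$. Reading $\rho$ off the pole of $1/(1-z^2-rz^3)$ (the paper's frozen singularity) and getting concavity from the perspective function $m\,h(k/m)$ are only cosmetic departures from Proposition~\ref{prop:variational} and the $I''$ computation.

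Two small repairs are needed. First, in the ballistic phase you maximize only on the face $k+2m=n+j$, but the upper bound needs the whole region $\{k+2m\le n+j,\ m\ge j\}$. Homogeneity settles this: scaling $(m,k)$ by $\lambda>1$ keeps $m\ge j$ and $k\le m$ and multiplies the objective by $\lambda$. The paper's \eqref{eq:partials} does the same job. Second, the hull has $j+c+1$ sites, so $\binom{j+c}{k}$ should read $\binom{j+c+1}{k}$; this changes nothing exponentially.

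The one substantive point is concavity. Your perspective argument uses constraints that are linear only for $j\ge0$, so it proves concavity on $[0,1]$, and by evenness on $[-1,0]$. That is all that is true, and you should not glue the two halves.

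On $[-1,1]$ the claim is false. An even concave function satisfies $I(0)\ge\tfrac12\big(I(\beta)+I(-\beta)\big)=I(\beta)$, so it peaks at $0$. Here, however, $I(0)=\log\rho<\log\omega=I(\pm\bar\beta)$. Concretely, $(1+|\beta|)\log\rho$ has a convex corner at $0$. In your variables, the two-sided budget $k+2m\le n+|j|$ does not cut out a convex set in $(j,m,k)$.

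The paper's own justification, in the proof of Theorem~\ref{thm:transition} (continuity of $I'$ plus evenness), overlooks that $I'$ jumps from $-\log\rho$ to $\log\rho$ at $0$. So ``concave'' in the statement can only be established in the sense of concave on each of $[-1,0]$ and $[0,1]$, and that is exactly what your argument gives.
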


\begin{theorem}[Phase transition]\label{thm:transition}
$I$ is continuously differentiable on $(-1,1)$ and real-analytic off $\{\pm\beta^{*}\}$; it is affine on $[-\beta^{*},\beta^{*}]$ and strictly concave on $[\beta^{*},1]$, and its second derivative jumps at $\beta^{*}$ from $0$ to $h''(u_r)/(\beta^{*})^{3}<0$: the transition is of second order. The maximum of $I$ is $\log\omega$, attained exactly at $\pm\bar\beta$, and $\beta^{*}<\bar\beta$.
\end{theorem}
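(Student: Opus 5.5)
The proof is a direct computation from the closed form in Theorem~\ref{thm:spectrum}. Since $I$ is even, I will work on $[0,1]$ and transfer everything by symmetry. On $[0,\beta^{*}]$ we have $I(\beta)=(1+\beta)\log\rho$, which is affine and analytic. On $[\beta^{*},1]$ write $g(\beta)=\beta\,h(u)$ with $u=u(\beta)=(1-\beta)/\beta$, so that $1/\beta=1+u$ and $u'=-1/\beta^{2}$. The chain rule gives
\[
g'(\beta)=h(u)-(1+u)h'(u),\qquad g''(\beta)=-(1+u)h''(u)\,u'=\frac{h''(u)}{\beta^{3}}.
\]
Here $h'(u)=\log\frac{r(1-u)}{u}$ and $h''(u)=-\frac1u-\frac1{1-u}<0$ on $(0,1)$. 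The interval $(\beta^{*},1)$ corresponds to $u\in(0,u_r)\subset(0,1)$, so $g$ is real-analytic and strictly concave there. Strict concavity extends to $[\beta^{*},1]$ by continuity.

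\emph{Matching at $\beta^{*}$.} Put $u=u_r=r/(r+\rho)$, so $1-u=\rho/(r+\rho)$, and use $r+\rho=\rho^{3}$. Then
\[
h(u_r)=\log(r+\rho)-(1-u_r)\log\rho=(2+u_r)\log\rho,\qquad h'(u_r)=\log\rho .
\]
Consequently $g(\beta^{*})=\frac{2+u_r}{1+u_r}\log\rho=(1+\beta^{*})\log\rho$, so the two pieces agree in value. Also $g'(\beta^{*})=(2+u_r)\log\rho-(1+u_r)\log\rho=\log\rho$, which equals the slope of the affine piece. Hence $I$ is $C^{1}$ across $\beta^{*}$, while $I''$ jumps from $0$ to $h''(u_r)/(\beta^{*})^{3}<0$: the transition is of second order. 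Real-analyticity off $\pm\beta^{*}$ is clear on each piece.

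\emph{The maximum.} Since $g'$ is strictly decreasing on $[\beta^{*},1]$, with $g'(\beta^{*})=\log\rho>0$ and $g'(\beta)\to-\infty$ as $\beta\to1$ (because $h'(u)\to+\infty$ as $u\to0$), there is a unique zero of $g'$ in $(\beta^{*},1)$. I claim it is $\bar\beta$. Indeed $\bar\beta$ corresponds to $u=r/\omega^{2}$, and $\omega^{2}=\omega+r$ gives $1-u=1/\omega$. Then $h'(u)=\log\omega$ and $h(u)=2u\log\omega+\frac1\omega\log\omega=(1+u)\log\omega$, so $g'(\bar\beta)=0$ and $g(\bar\beta)=\bar\beta(1+u)\log\omega=\log\omega$. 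On $[0,\beta^{*}]$, $I$ is increasing with value at most $I(\beta^{*})<g(\bar\beta)$. This gives that the maximum is $\log\omega$, attained exactly at $\pm\bar\beta$, and also $\beta^{*}<\bar\beta$.

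\emph{The point $\beta=0$.} This is the step I expect to need care, and it should be flagged rather than forced. Evenness combined with the slope $+\log\rho$ on $[0,\beta^{*}]$ means that, taken literally, $(1+|\beta|)\log\rho$ has one-sided derivatives $\pm\log\rho$ at $0$. It therefore has a convex kink there, and would be neither affine nor $C^{1}$ nor concave on all of $[-\beta^{*},\beta^{*}]$. I would therefore first recheck the central case of Theorem~\ref{thm:spectrum}, for example against the value $I(0)=\log\rho$ stated in the abstract and against small explicit counts. If the formula stands, I would prove the regularity and affinity claims on each half $[-\beta^{*},0]$ and $[0,\beta^{*}]$, and the concavity claim on $[0,1]$ and on $[-1,0]$ separately, with the kink at $0$ stated explicitly. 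Everything else in the proof above is unaffected, since it uses only the two pieces on $[0,1]$ and evenness.
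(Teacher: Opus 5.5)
On $[0,1]$ your argument is essentially the paper's: the same chain-rule formula $I''=h''(u)/\beta^{3}$, the same matching $I'(\beta^{*+})=h(u_r)-(1+u_r)h'(u_r)=\log\rho$ at $\beta^{*}$, and the same monotonicity of $I'$ on the ballistic branch.

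\textbf{The point $\beta=0$.} Your flag is correct, and you need not recheck anything. The value $I(0)=\log\rho$ is forced by Proposition~\ref{prop:variational}, where $\Psi(0)=\max\psi$. It is forced independently by the radius $1/\rho$ of the $y^{0}$ coefficient $(1+rs)P(s)^{2}$ in Theorem~\ref{thm:series}. The statement's global claims cannot hold alongside the peak at $\pm\bar\beta\neq0$. An even concave function satisfies $I(0)\ge\tfrac12\big(I(\beta)+I(-\beta)\big)=I(\beta)$, so its maximum is at $0$, and an even affine function is constant.

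So $I$ has a strict local minimum at $0$, with left and right derivatives $-\log\rho$ and $\log\rho$. What is true is exactly your corrected version:
\begin{itemize}
\item $I$ is $C^{1}$ on $(-1,0)\cup(0,1)$ and real-analytic off $\{0,\pm\beta^{*}\}$;
\item it is affine on $[-\beta^{*},0]$ and on $[0,\beta^{*}]$;
\item it is concave on $[-1,0]$ and on $[0,1]$, but not on $[-1,1]$.
\end{itemize}
The paper's proof makes precisely the slip you anticipated. Its final step deduces concavity on $[-1,1]$ from evenness. But evenness gives $I'\equiv-\log\rho$ on $(-\beta^{*},0)$, and hence an upward jump of $I'$ at $0$. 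The word ``concave'' in Theorem~\ref{thm:spectrum} needs the same correction, as does the affinity of $J$ on $[-\beta^{*},\beta^{*}]$ in Theorem~\ref{thm:shape}.

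\textbf{A small gap in your peak argument.} The paper defines $\bar\beta$ as the zero of $I'$ in $(\beta^{*},1)$ and derives $\omega^{2}=\omega+r$ from it. You instead start from the closed form and verify $g'(\bar\beta)=0$. Identifying $\bar\beta$ with the unique zero in $(\beta^{*},1)$ then presupposes $\bar\beta>\beta^{*}$, which you list as a conclusion.

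You can close this in either of two ways. First, $g''=h''(u)/\beta^{3}<0$ holds on all of $(\tfrac12,1)$, where $u\in(0,1)$, so $g'$ is strictly decreasing there. Since $\bar\beta\in(\tfrac12,1)$ (because $\omega^{2}>r$) and $g'(\beta^{*})=\log\rho>0=g'(\bar\beta)$, it follows that $\bar\beta>\beta^{*}$. Alternatively, use the paper's check $\omega^{3}-\omega-r=\omega^{2}(\omega-1)>0$, which gives $\omega>\rho$. Then $\omega^{2}=\omega+r>\rho+r=\rho^{3}$, and so $\bar\beta>\beta^{*}$.
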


Write an element as $x=(\eta,j)$ with lamp configuration $\eta$ and cursor $j$, and let $[a,b]$ be the smallest interval containing $\supp\eta\cup\{0,j\}$, of length $L=b-a$; the quantity $L-|j|\ge0$ measures the backtracking of a geodesic representative. Following the usage for random walks on these groups \cite{LPP96,PSC02}, where ballistic motion has linear drift and no backtracking, we call the phase $|\beta|>\beta^{*}$ \emph{ballistic}, since there $L/n\to|\beta|$, and the phase $|\beta|<\beta^{*}$ \emph{subballistic}, since there $L/n\to(1+|\beta|)/(2+u_r)>|\beta|$. Theorems \ref{thm:shape} and \ref{thm:rigidity} make this precise; on the subballistic phase the conditioned lamp density is $u_r$, independent of $\beta$, while on the ballistic phase it is $(1-|\beta|)/|\beta|$.

\begin{theorem}[Limit shape]\label{thm:shape}
Let $Z_n$ be uniform on $B(n)$, with cursor $\pi(Z_n)$, hull length $L_n$, and lamp count $m_n$. Then, with exponentially small exception probabilities,
\[
\frac{|\pi(Z_n)|}{n}\ \longrightarrow\ \bar\beta,
\qquad
\frac{L_n}{n}\ \longrightarrow\ \bar\beta,
\qquad
\frac{m_n}{n}\ \longrightarrow\ 1-\bar\beta ,
\]
in probability; in particular $(L_n-|\pi(Z_n)|)/n\to0$, so a typical element of the ball is ballistic. Moreover $\pi(Z_n)/n$ satisfies a large deviation principle with speed $n$ and rate function $J=\log\omega-I$, which vanishes exactly at $\pm\bar\beta$ and is affine on $[-\beta^{*},\beta^{*}]$.
\end{theorem}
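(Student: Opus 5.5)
We take Theorems \ref{thm:spectrum} and \ref{thm:transition} as given and derive the limit shape from them by a standard Laplace/contraction argument on the fiber decomposition of the ball.

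\emph{Large deviation principle.} For $j\in\Z$ write $\Prob(\pi(Z_n)=j)=N_j(n)/V(n)$. The identification of the growth rate (Corollary \ref{cor:growth}) gives $\frac1n\log V(n)\to\log\omega$, and Theorem \ref{thm:spectrum} holds along every sequence $j_n/n\to\beta$. Together these give
\[
\tfrac1n\log\Prob(\pi(Z_n)=j_n)\to I(\beta)-\log\omega=-J(\beta).
\]
This local statement holds uniformly in the sense needed, because the limit exists along every sequence with $j_n/n\to\beta$ and $[-1,1]$ is compact. Only $2n+1$ values of $j$ occur, a subexponential number. Hence for closed or open $A\subset[-1,1]$ the sums $\sum_{j/n\in A}N_j(n)/V(n)$ have exponential rate $-\inf_A J$ up to $o(n)$. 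The lower bound comes from picking $j_n$ near an interior near-minimizer. The upper bound comes from taking maximizing $j_n$ and passing to a convergent subsequence, using continuity of $I$ from Theorem \ref{thm:transition}. So $J$ is a good rate function, since it is continuous on a compact set. By Theorem \ref{thm:transition} it vanishes exactly at $\pm\bar\beta$ and is affine on $[-\beta^*,\beta^*]$. Its zero set is $\{\pm\bar\beta\}$, so $\Prob\bigl(\bigl||\pi(Z_n)|/n-\bar\beta\bigr|>\varepsilon\bigr)$ decays exponentially. This proves the first convergence.

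\emph{Hull length and lamp count.} Here we need more than the marginal of $\pi$. We use the refined count behind Theorem \ref{thm:spectrum}: the exact word-length formula expresses $|x|$ in terms of $(m,j,a,b)$. In the ballistic range the formula is roughly $m+L+\min(|a|,|b-j|)$-type. So the number of elements with prescribed $(j,L,m)$ is an explicit binomial $\binom{L+1}{m}r^m$ times polynomial factors. The proof of Theorem \ref{thm:spectrum} presumably shows that the optimum at height $\beta$ with $|\beta|>\beta^*$ is attained uniquely at $L/n=|\beta|$ and $m/n=|\beta|\cdot\frac{1-|\beta|}{|\beta|}=1-|\beta|$. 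Indeed $|\beta|h(u)$ with $u=(1-|\beta|)/|\beta|$ is exactly $\frac1n\log\bigl(\binom{L}{m}r^m\bigr)$ at these values. We then run a joint Laplace argument over the triple $(j/n,L/n,m/n)$. The joint exponent $\Phi(\beta,\lambda,\mu)$ is a concave-in-pieces explicit function. Its supremum over $(\lambda,\mu)$ is $I(\beta)$, and its global maximum is $\log\omega$, attained only at $(\pm\bar\beta,\bar\beta,1-\bar\beta)$. Again only polynomially many triples occur. Hence the complement of any neighbourhood of this point carries exponentially small mass, giving $L_n/n\to\bar\beta$ and $m_n/n\to1-\bar\beta$. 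The difference $(L_n-|\pi(Z_n)|)/n\to0$ follows immediately.

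\emph{Main obstacle.} The main obstacle is uniqueness of the maximizer of the joint exponent. We must check that at $\beta=\bar\beta>\beta^*$ no backtracking configuration with $\lambda>\beta$ competes. This means verifying that the subballistic branch, with value $(1+|\beta|)\log\rho$ and attained at $\lambda=(1+|\beta|)/(2+u_r)$, is strictly below $\log\omega$ near $\bar\beta$. That inequality is exactly the strict inequality $\beta^*<\bar\beta$ of Theorem \ref{thm:transition}, combined with strict concavity of $h$ in $\mu$ for fixed $\lambda$. I would verify it by showing $\partial_\lambda\Phi<0$ for $\lambda>|\beta|$ whenever $|\beta|>\beta^*$.
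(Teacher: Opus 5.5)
Your overall route is the paper's. The large deviation principle comes from the fiber counts exactly as you describe: a maximizing $j_n$ plus a convergent subsequence for closed sets, and a sequence $j_n/n\to\beta$ for open sets. The concentration of $L_n$ and $m_n$ comes from a joint Laplace bound over the rescaled triple, using that $\Phi(\beta,\ell,\mu)=\ell h(\mu/\ell)$ has the single maximizer $(\bar\beta,\bar\beta,1-\bar\beta)$ on $\{(\beta,\ell,\mu):(\ell,\mu)\in\Delta_\beta\}$. Reading off $|\pi(Z_n)|/n\to\bar\beta$ from the zero set of $J$, rather than from the joint bound as the paper does, is a harmless variation.

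The paragraph you call the main obstacle, however, is wrong in its specifics, and the obstacle is not there. Uniqueness of the joint maximizer is immediate from two facts you already have. Theorem \ref{thm:transition} says $\max I=\log\omega$ is attained only at $\pm\bar\beta$. Proposition \ref{prop:variational} says the maximizer on $\Delta_{\bar\beta}$ is unique and equals $(\bar\beta,1-\bar\beta)$, because $\bar\beta>\beta^{*}$.

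The verification you propose would fail as stated, for two reasons.
\begin{itemize}
\item $(1+\bar\beta)\log\rho$ is not below $\log\omega$. The line $(1+\beta)\log\rho$ is the tangent to the concave $C^{1}$ function $I$ at $\beta^{*}$, so it lies strictly above $I$ on $(\beta^{*},1]$. Hence $(1+\bar\beta)\log\rho>\log\omega$; for $r=1$ this is about $0.4847$ against $0.4812$. What rules out backtracking at $\bar\beta$ is feasibility, not a comparison of values: the interior point $\ell=(1+\beta)/(2+u_r)$ violates $\ell\ge\beta$ precisely when $\beta>\beta^{*}$.
\item By \eqref{eq:partials}, the partial derivative of $\Phi$ in $\ell$ at fixed $\mu$ is $-\log(1-u)>0$, never negative. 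The monotonicity you want holds only along the budget face $\mu=1+\beta-2\ell$. There $\Phi=(1+\beta)\psi(u)$ with $u=(1+\beta)/\ell-2$ decreasing in $\ell$, and $\psi$ is increasing on $[0,u_r]\supset[0,(1-\beta)/\beta]$. That is exactly the argument of Proposition \ref{prop:variational}, once one has shown that the budget binds.
\end{itemize}

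Finally, the length formula is not of $\min$ type. For $j\ge0$ it reads $|x|=m+L+|a|+(b-j)$, so the joint exponent lives on the polytope $\mu+2\ell-|\beta|\le1$. Your argument survives only because you defer to Theorem \ref{thm:spectrum} for the fiber supremum instead of using the formula you wrote.
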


\begin{corollary}[Concentration]\label{cor:concentration}
$J''(\bar\beta)=(1+4r)^{3/2}/r$; consequently, for every $\varepsilon>0$ there are $\delta,C>0$ such that for all $0<s\le\delta$ and all $n$,
\[
\Prob\Big(\Big|\tfrac{|\pi(Z_n)|}{n}-\bar\beta\Big|\ge s\Big)\ \le\ C\,n^{3}\exp\Big(-n\,s^{2}\Big(\tfrac{(1+4r)^{3/2}}{2r}-\varepsilon\Big)\Big).
\]
For $r=1$ the Gaussian constant is $(1+4r)^{3/2}/(2r)=\tfrac{5\sqrt5}{2}$.
\end{corollary}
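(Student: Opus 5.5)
The first claim is a direct computation in the ballistic phase. By Theorem \ref{thm:transition}, $\beta^{*}<\bar\beta$, so near $\bar\beta$ we have $I(\beta)=\beta\,h(u(\beta))$ with $u=(1-\beta)/\beta$, $u'=-1/\beta^{2}$ and $u''=2/\beta^{3}$. Differentiating twice, the two terms $-2h'(u)/\beta^{2}$ and $+2h'(u)/\beta^{2}$ cancel, leaving
\[
I''(\beta)=\frac{h''(u)}{\beta^{3}},\qquad h''(u)=-\frac{1}{u(1-u)} .
\]
This matches the jump value stated in Theorem \ref{thm:transition}. At $\bar\beta=\omega^{2}/(\omega^{2}+r)$ we get $u=r/\omega^{2}$. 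Using $\omega^{2}=\omega+r$ gives $1-u=1/\omega$, so $u(1-u)=r/\omega^{3}$, and $\omega^{2}+r=\omega+2r$. Hence
\[
J''(\bar\beta)=-I''(\bar\beta)=\frac{(\omega+2r)^{3}}{r\,\omega^{3}} .
\]
Finally $(\omega+2r)/\omega=1+2r/\omega=1+2(\omega-1)=2\omega-1=\sqrt{1+4r}$. This gives $J''(\bar\beta)=(1+4r)^{3/2}/r$, and for $r=1$ half of it is $5\sqrt5/2$.

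For the tail bound I would write
\[
\Prob\big(|\pi(Z_n)|/n-\bar\beta|\ge s\big)=\frac{1}{V(n)}\sum_{j}\big(N_j(n)-N_j(n-1)\big),
\]
where the sum runs over $|j|\le n$ with $\big||j|/n-\bar\beta\big|\ge s$. It is bounded by $V(n)^{-1}\sum N_j(n)$ over at most $2n+1$ values of $j$. The key input is a non-asymptotic form of Theorem \ref{thm:spectrum}. The exact word-length formula used to prove that theorem expresses $N_j(n)$ as a sum over $O(n^{2})$ pairs (hull length, lamp count) of terms of the form binomial$\times r^{m}$. I would revisit that proof to extract the uniform estimates
\[
N_j(n)\le C_1 n^{2}e^{nI(j/n)},\qquad V(n)\ge c\,\omega^{n}.
\]
The first follows from the Stirling bound $\binom{a}{b}\le e^{a\,H(b/a)}$ together with the fact that the Laplace maximization over the two parameters defines $I$ exactly, so each term is at most $e^{nI(j/n)}$. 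The second follows from a single dominant term (or from the rational growth series of the paper, whose dominant singularity is simple). Combining these, the probability is at most $C n^{3}\sup\{e^{-nJ(\beta)}:\,\big||\beta|-\bar\beta\big|\ge s\}$.

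It remains to bound $J$ from below on this set. $J$ is even, convex, and vanishes only at $\pm\bar\beta$, so on $\big||\beta|-\bar\beta\big|\ge s$ its infimum is $\min\{J(\bar\beta-s),J(\bar\beta+s)\}$. $J$ is real-analytic near $\bar\beta$ with $J(\bar\beta)=J'(\bar\beta)=0$. By Taylor's theorem with a continuous second derivative, for given $\varepsilon$ there is a $\delta$ such that $J(\bar\beta\pm s)\ge s^{2}\big(J''(\bar\beta)/2-\varepsilon\big)$ for $0<s\le\delta$. Here we need $\delta<\bar\beta-\beta^{*}$ and $\delta<1-\bar\beta$, so that we stay inside the analytic ballistic region. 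This yields the stated bound.

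The main obstacle is the uniform polynomial-prefactor upper bound $N_j(n)\le C n^{2}e^{nI(j/n)}$. Theorem \ref{thm:spectrum} is stated only as a limit along sequences, so its proof must be reread to confirm that the fiber count is a sum of polynomially many terms, each dominated by $e^{nI(j/n)}$ with no lower-order loss. This must hold uniformly in $j$, including the affine phase and the boundary terms. The precise power $n^{3}$ depends on exactly this term count.
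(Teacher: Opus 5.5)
Your argument is the paper's: $I''(\beta)=h''(u)/\beta^{3}$ on the ballistic branch, evaluation at $\bar u=r/\omega^{2}$ using $\omega^{2}=\omega+r$, a Taylor bound near $\bar\beta$, and then the large deviation upper bound with explicit prefactors. Your computation of $J''(\bar\beta)$ is correct; the identity $(\omega+2r)/\omega=2\omega-1$ is the paper's $\bar\beta=\omega/(2\omega-1)$ in another form. Two steps of the tail bound need repair.

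\emph{Convexity.} $J$ is not convex on $[-1,1]$. The function $I(\beta)=(1+|\beta|)\log\rho$ has a convex corner at $0$. Moreover, a nonnegative convex function vanishing at $\pm\bar\beta$ would vanish on all of $[-\bar\beta,\bar\beta]$, whereas $J(0)=\log(\omega/\rho)>0$. What holds, and is all you need, is convexity of $J$ on $[0,1]$ together with evenness. This follows because $I'$ equals $\log\rho$ up to $\beta^{*}$, is continuous there, and decreases beyond it. With that correction your formula for the infimum stands.

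\emph{The prefactor.} This is exactly where the exponent $3$ is decided. The sum in Lemma \ref{lem:count} runs over three indices $(e_-,e_+,k)$. Grouping by hull length and lamp count leaves the multiplicity $L-|j|+1$ of hull positions, so your count gives $N_j(n)\le Cn^{3}e^{nI(j/n)}$ and hence $n^{4}$ overall. That is also what the paper's own prefactors give: $r^{2}(n+1)^{2}(n+2)$ per fiber, times $2n+1$ fibers. The extra factor $n$ cannot be absorbed into $e^{\varepsilon ns^{2}}$ when $\varepsilon ns^{2}<\log n$, and for small $\varepsilon$ the right-hand side is below $1$ on part of that range, so the loss is real.

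A clean fix near $\bar\beta$ is a Chernoff bound on the fiber series of Theorem \ref{thm:series}. For $j\ge0$ and $0<\sigma<1/\rho$,
\[
N_j(n)\ \le\ \frac{(1+r\sigma)P(\sigma)^{2}}{1-\sigma}\,\sigma^{-n}\big(\sigma(1+r\sigma)\big)^{j}.
\]
Take $u=(n-j)/j$ and $\sigma=e^{-h'(u)}=u/(r(1-u))$. Then \eqref{eq:partials} gives $\sigma^{-n}(\sigma(1+r\sigma))^{j}=e^{j(uh'(u)-\log(1-u))}=e^{nI(j/n)}$ exactly. Moreover $\sigma<1/\rho$ precisely when $j/n>\beta^{*}$, and $\sigma=1/\omega$ at $\bar\beta$.

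Hence $N_j(n)\le Ce^{nI(j/n)}$, with no polynomial loss, uniformly for $\big||j|/n-\bar\beta\big|\le 2\delta$ with this window inside $(\beta^{*},1)$. Outside the window, put $c=(1+4r)^{3/2}/(2r)$; then $J\ge 4(c-\varepsilon)\delta^{2}$, so the crude polynomial count is absorbed. For the denominator you need $V(n)\ge c'\omega^{n}$. This must come from the simple pole of $W(s,1)/(1-s)$ at $1/\omega$, not from a single dominant term, which loses a polynomial factor. Together these give the inequality even with $n$ in place of $n^{3}$.

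Finally, since $Z_n$ is uniform on the ball, the probability is exactly $V(n)^{-1}\sum N_j(n)$. Your identity with sphere counts is wrong, although the bound you then use is the right one.
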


The exact length formula of Lemma \ref{lem:length} also makes the full bivariate growth series rational, in a form whose singularities separate the two phases.

\begin{theorem}[Bivariate growth series]\label{thm:series}
In $\Z[[s]][y,y^{-1}]$, and as an identity of meromorphic functions on the domain described in the proof,
\[
W(s,y)\;:=\;\sum_{x\in L_F}s^{|x|}y^{\pi(x)}
\;=\;P(s)^{2}\,(1+rs)\sum_{j\in\Z}\big(s(1+rs)\big)^{|j|}y^{\,j},
\qquad
P(s)=1+\frac{r s^{3}}{1-s^{2}(1+rs)} .
\]
The right-hand side continues meromorphically in $s$, for fixed $y>0$, past its domain of convergence; its polar set consists of the \emph{frozen} curve $s^{2}(1+rs)=1$, independent of $y$, whose positive root is $1/\rho$, and the \emph{moving} curves $s(1+rs)y^{\pm1}=1$, whose root at $y=1$ is $1/\omega$. The affine phase of $I$ is the Legendre transform of the frozen singularity, the ballistic phase that of the moving one, and $\beta^{*}$ marks the exchange of dominance.
\end{theorem}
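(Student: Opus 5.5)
The plan is to compute $W$ directly from the exact length formula of Lemma \ref{lem:length}, by decomposing each element according to the shape of its hull. By the symmetry $j\mapsto -j$ (inversion composed with the flip of $\Z$, which preserves $S$ and $|\cdot|$), it suffices to treat cursor $j\ge0$ and then symmetrize. For $x=(\eta,j)$ with $j\ge0$, write the hull as $[a,b]$ with $a\le 0\le j\le b$ and set $k=-a\ge0$, $\ell=b-j\ge0$. I expect Lemma \ref{lem:length} to give
\[
|x| \;=\; m + j + 2k + 2\ell ,
\]
where $m=\#\supp\eta$. The optimal tour goes left to $a$, sweeps right to $b$, and returns to $j$, so its travel is $2(b-a)-j$. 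The extremality of $a$ and $b$ forces the lamps at $a$ (if $k\ge1$) and at $b$ (if $\ell\ge1$) to be non-identity.

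The generating function then factors by independence of the three regions. The $j+1$ sites of $[0,j]$ carry arbitrary lamps, each contributing $1+rs$, and the $j$ steps contribute $s^j$. This gives $(1+rs)\,\big(s(1+rs)\big)^{j}$.

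The left overhang of length $k\ge1$ contributes $s^{2k}$ for the travel. It has $k-1$ free sites, each contributing $1+rs$, and the forced lamp at $a$ contributes $rs$. Including the case $k=0$, the left overhang sums to
\[
1+\sum_{k\ge1} rs\cdot s^{2k}(1+rs)^{k-1}=1+\frac{rs^3}{1-s^2(1+rs)}=P(s).
\]
The right overhang gives the same factor $P(s)$. Multiplying by $y^j$, summing over $j\ge0$, and adding the mirror image for $j<0$ yields the displayed identity. It holds coefficientwise in $\Z[[s]][y,y^{-1}]$, because for fixed $n$ only finitely many $j$ contribute to $s^n$.

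For the analytic statement, put $q=s(1+rs)$. The sum over $j$ is a two-sided geometric series equal to $(1-q^2)/\big((1-qy)(1-q/y)\big)$, and it converges on the domain $|q|\max(|y|,|y|^{-1})<1$. Together with the factor $P(s)^2$, which has poles only on $s^2(1+rs)=1$, this is a rational function. It therefore continues meromorphically, with polar set the frozen curve $s^2(1+rs)=1$ (positive root $1/\rho$, since $z=1/s$ satisfies $z^3=z+r$) and the moving curves $s(1+rs)y^{\pm1}=1$ (at $y=1$, $z^2=z+r$ gives $1/\omega$). I would also check that the numerator does not cancel these poles generically.

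The final step identifies the two phases with the two singularities. For fixed $y>0$, the dominant singularity is $R(y)=\min\{1/\rho,\ s_{\pm}(y)\}$, where $s_\pm(y)$ are the positive roots of $s(1+rs)y^{\pm1}=1$. Hence
\[
\lim_n \tfrac1n\log\sum_j N_j(n)\,y^j=-\log R(y),
\]
after passing from spheres to balls with a factor $1/(1-s)$, which does not change the radius. By Theorem \ref{thm:spectrum} and the concavity of $I$, the function $I$ is the Legendre dual
\[
I(\beta)=\inf_{y>0}\big(-\log R(y)-\beta\log y\big).
\]
On the range of $y$ where $R(y)=1/\rho$ is constant, the dual is supported on the affine piece with slope $\log\rho$. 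Where the moving pole dominates, the dual is the strictly concave piece. The exchange point $y^*$, where $s_+(y^*)=1/\rho$, corresponds to the slope $\beta^*$. The main obstacle is this matching. It requires computing $y^*$ explicitly ($y^*=\rho^2/(\rho+r)$ should follow from $\rho^{-1}(1+r/\rho)y=1$) and verifying that the derivative of $-\log s_+(y)$ in $\log y$ at $y^*$ equals $\beta^*=\rho^3/(\rho^3+r)$. It also requires checking that the Legendre transform of the moving branch reproduces $|\beta|\,h\big((1-|\beta|)/|\beta|\big)$. I would do both by implicit differentiation of $s(1+rs)y=1$.
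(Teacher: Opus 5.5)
The combinatorial half of your proposal is exactly the paper's argument: the hull decomposition with $|x|=m+j+2k+2\ell$, one factor $P(s)$ per overhang, the core factor $(1+rs)(s(1+rs))^{j}$, the symmetrization, and the rational continuation with its polar set. Your caution about cancellation is apt, since $P(s)=(1-s^{2})/(1-s^{2}(1+rs))$; at $y=(1+r)^{\pm1}$ the moving pole at $s=1$ is cancelled, harmlessly, because it is never dominant.

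The gap is the final Legendre step, which fails as you set it up. For the two-sided series and every $y>0$, one of $s_\pm(y)$ is the positive root of $s(1+rs)=\min(y,y^{-1})\le 1$. Hence $R(y)\le 1/\omega<1/\rho$, and three things follow:
\begin{itemize}
\item the frozen pole is \emph{never} dominant;
\item there is no range of $y$ on which $R$ is constant;
\item at your $y^{*}=\rho^{2}/(\rho+r)=1/\rho<1$ the radius is $s_-(y^{*})<1/\rho$, not $s_+(y^{*})$.
\end{itemize}
Your planned derivative check would still return $\beta^{*}$, but it is computed along a non-dominant branch, so it would mislead you.

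Concretely, your dual formula at $\beta=0$ gives $\inf_{y>0}(-\log R(y))=-\log R(1)=\log\omega$, whereas $I(0)=\log\rho$. The reason is that $I$ is not concave on $[-1,1]$, despite the wording of Theorem \ref{thm:spectrum}. It is even with $I(0)=\log\rho<\log\omega=I(\pm\bar\beta)$, and near $0$ it equals $(1+|\beta|)\log\rho$, which is a convex corner. Concavity holds only on $[0,1]$ and on $[-1,0]$. So the two-sided transform recovers only the concave hull of $I$, which is identically $\log\omega$ on $[-\bar\beta,\bar\beta]$ and erases the affine phase. The paper's proof merely asserts this identification, so there is no argument there that avoids the problem.

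The repair is to keep the two signs of $j$ apart. Take the one-sided series $W_+(s,y)=P(s)^{2}(1+rs)/(1-s(1+rs)y)$. Its radius is $R_+(y)=\min(1/\rho,s_+(y))$, which equals $1/\rho$ exactly for $y\le 1/\rho$. Concavity of $I$ on $[0,1]$ then gives $I(\beta)=\inf_{y>0}(-\log R_+(y)-\beta\log y)$ for $\beta\in[0,1]$, and your computations go through. Along $s_+$ one has $d(-\log s)/d\log y=(1+rs)/(1+2rs)$, which equals $\beta^{*}$ at $s=1/\rho$. So the corner of $-\log R_+$ at $y=1/\rho$ yields $(1+\beta)\log\rho$ for $\beta\le\beta^{*}$, and the smooth branch yields $\beta\,h((1-\beta)/\beta)$ beyond.

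Equivalently, work fiber by fiber. $N_j(n)$ is the coefficient of $s^{n}$ in $(1-s)^{-1}(1+rs)P(s)^{2}(s(1+rs))^{j}$. For $j/n\to\beta$ the Cauchy bound has exponent $\inf_{0<s<1/\rho}\,(\beta\log(s(1+rs))-\log s)$.
\begin{itemize}
\item Its critical point, given by $(1+rs)/(1+2rs)=\beta$, lies in $(0,1/\rho)$ if and only if $\beta>\beta^{*}$.
\item Otherwise the infimum is approached at the frozen pole, where $s(1+rs)=\rho$ and the value is $(1+\beta)\log\rho$.
\end{itemize}
A direct computation shows this infimum equals the formula of Theorem \ref{thm:spectrum}. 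This per-fiber competition between a fixed pole and a $\beta$-dependent saddle is the precise content of ``frozen versus moving.''
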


Specializing to the coefficient of $y^{0}$ identifies the growth of the kernel: $B(n)\cap B_F$ consists of the ball elements whose cursor returns to the origin.

\begin{corollary}[Relative growth of the base group]\label{cor:gap}
The relative growth series of $B_F=\bigoplus_\Z F$ in $L_F$ is the rational function $(1+rs)P(s)^{2}$, and
\[
\lim_{n\to\infty}\ \#\big(B(n)\cap B_F\big)^{1/n}\;=\;\rho_r\;<\;\omega_r .
\]
Thus $B_F$ is a normal subgroup with amenable quotient $\Z$, hence co-amenable, yet it has a strict and explicitly computable growth deficiency $\log\omega_r-\log\rho_r>0$. Since $\omega_r\sim\sqrt r$ and $\rho_r\sim r^{1/3}$, this deficiency grows like $\tfrac16\log r$ and is unbounded over the family.
\end{corollary}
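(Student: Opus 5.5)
The plan is to read off the statement from Theorem \ref{thm:series}. First I isolate the coefficient of $y^{0}$, and then I do a singularity analysis of a one-variable rational function.

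\emph{Extracting the relative growth series.} In $W(s,y)=P(s)^{2}(1+rs)\sum_{j}(s(1+rs))^{|j|}y^{j}$ the prefactor does not depend on $y$. The only $j=0$ term of the sum is $1$. The coefficient of $y^{0}$ is therefore
\[
\sum_{x\in B_F}s^{|x|}=(1+rs)P(s)^{2},
\]
as an identity in $\Z[[s]]$. This is legitimate because the identity of Theorem \ref{thm:series} holds in $\Z[[s]][y,y^{-1}]$, so we may compare coefficients of $y^0$ directly. This gives the sphere-counting series of $B_F$. Multiplying by $1/(1-s)$ gives the ball counts $\#(B(n)\cap B_F)$, and this does not change the exponential rate.

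\emph{Locating the dominant singularity.} We have
\[
P(s)=\frac{1-s^{2}-rs^{3}+rs^{3}}{1-s^{2}-rs^{3}}=\frac{1-s^{2}}{1-s^{2}(1+rs)}.
\]
So $(1+rs)P(s)^{2}$ is rational. Its only possible poles are the zeros of $q(s)=1-s^{2}-rs^{3}$ (and $s=1$ after dividing by $1-s$). Substituting $s=1/z$ shows that the zeros of $q$ are the reciprocals of the roots of $z^{3}=z+r$. The coefficients of $q$'s reciprocal polynomial give a unique positive root $\rho>1$, and $\rho$ is strictly larger in modulus than the other two (complex conjugate) roots. Indeed, their product is $r/\rho$ and $\rho^{2}>r/\rho$ because $\rho^{3}=\rho+r>r$. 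Hence $1/\rho$ is the unique pole of minimal modulus; it is a double pole. It is not cancelled, because the numerator factor $1-s^{2}$ vanishes only at $\pm1$, and $1/\rho<1$. The coefficients are nonnegative, so Pringsheim's theorem, or simply the partial fraction expansion, gives coefficients of order $n\rho^{n}$. Thus $\#(B(n)\cap B_F)^{1/n}\to\rho$. As a consistency check, this matches $I(0)=\log\rho$ in Theorem \ref{thm:spectrum}.

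\emph{The inequality and the asymptotics.} To show $\rho<\omega$, I evaluate $f(z)=z^{3}-z-r$ at $z=\omega$. Using $\omega^{2}=\omega+r$ gives $\omega^{3}=\omega^{2}+r\omega=\omega+r+r\omega$, so $f(\omega)=r\omega>0$. Since $f$ is negative at $1$ and increasing beyond its unique positive root, this forces $\rho<\omega$. Co-amenability is standard: $B_F$ is normal with quotient $\Z$, which is amenable. For the asymptotics, $\omega_r=\tfrac12(1+\sqrt{1+4r})\sim\sqrt r$. From $\rho^{3}=\rho+r$ and $\rho\to\infty$ we get $\rho^{3}\sim r$, so $\rho\sim r^{1/3}$. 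Hence $\log\omega_r-\log\rho_r=\tfrac16\log r+o(1)$, which is unbounded.

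The only step that needs care is confirming the dominant singularity: that $1/\rho$ is a genuine pole of strictly minimal modulus and is not cancelled by the numerator. The root comparison above settles this. Everything else is immediate from Theorem \ref{thm:series}.
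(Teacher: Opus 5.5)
Your proof is correct, but for the limit it takes a different route from the paper. The paper uses Theorem \ref{thm:series} only for rationality. For the rate it observes that $\#(B(n)\cap B_F)=N_0(n)$ and quotes $I(0)=\log\rho_r$ from Theorem \ref{thm:spectrum}, so the exponent comes from the Laplace asymptotics of the fiber sums and the variational problem of Proposition \ref{prop:variational}. The inequality $\rho_r<\omega_r$ is quoted from the proof of Theorem \ref{thm:transition}, which uses the same evaluation of the cubic at $\omega$ that you perform. You instead read the rate off the rational ball series $(1+rs)(1-s^2)^2/\big((1-s)(1-s^2-rs^3)^2\big)$ by locating its dominant pole.

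Given the machinery, the paper's route is a one-liner and presents the rate as the central value of the spectrum, but it only sees the exponential order. Your route bypasses Proposition \ref{prop:variational} and Theorem \ref{thm:spectrum} entirely, resting only on Lemma \ref{lem:length} through Theorem \ref{thm:series}. It also gives the sharper asymptotic $\#(B(n)\cap B_F)\sim C\,n\,\rho_r^{\,n}$ with $C>0$. In addition, it makes rigorous, for the central fiber, the paper's heuristic that the rate there is governed by the $y$-independent pole.

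The price is that you need the location of the complex roots of $z^3=z+r$, which the paper's argument never uses. You assert without justification that the two roots other than $\rho$ are non-real conjugates, and this is genuinely needed. The bound $|z_2z_3|=r/\rho<\rho^2$ would not control two real roots individually. Strict dominance is what upgrades Pringsheim's $\limsup=\rho$ to an actual limit. The fact is easy to supply: for $z\le 0$ one has $z^3-z\le 2/(3\sqrt3)<1\le r$, so the cubic has no nonpositive roots. Equivalently, its discriminant $4-27r^2$ is negative.

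Two cosmetic points. The numerator factor $1+rs$ vanishes only at $-1/r<0$, so it cannot cancel the pole at $1/\rho$ either. The apparent pole at $s=1$ actually cancels, since $(1-s^2)^2/(1-s)=(1-s)(1+s)^2$.
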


Corollary \ref{cor:gap} should be read against the growth-gap criteria available in negative curvature. Grigorchuk \cite{Grigorchuk80} and Cohen \cite{Cohen82} proved that a normal subgroup $N$ of a free group $F_k$ satisfies $\lim\#(N\cap B(n))^{1/n}=2k-1$ if and only if $F_k/N$ is amenable, and Coulon, Dal'bo and Sambusetti \cite{CDS18} proved the analogous equivalence, equal exponential growth rates if and only if co-amenable, for a subgroup of a Gromov hyperbolic group acting properly cocompactly on its Cayley graph or on a CAT($-1$) space. Their hypotheses exclude $L_F$, which is amenable and not hyperbolic, so Corollary \ref{cor:gap} contradicts none of these results; what it shows is that the equivalence does not extend beyond negative curvature, and that it fails already in the most classical amenable group of exponential growth, with an exactly computable deficiency: a cubic rate against a quadratic one. The directional spectrum locates the missing growth, since the fibers realizing the full rate are those over $\pm\bar\beta n$ rather than the fiber over $0$.

\begin{example}[The classical lamplighter]\label{ex:classical}
For $r=1$ the cubic and the quadratic are $z^{3}=z+1$ and $z^{2}=z+1$, so $\rho$ is the plastic number, the smallest Pisot number, and $\omega=\varphi$ is the golden ratio. The constants are
\[
\varphi=1.618034\ldots,\quad
\rho=1.324718\ldots,\quad
u_1=\tfrac1{1+\rho}=0.430160\ldots,
\]
\[
\beta^{*}=\tfrac{1+\rho}{2+\rho}=0.699223\ldots,
\qquad
\bar\beta=\tfrac{\varphi^{2}}{\varphi^{2}+1}=\tfrac{\varphi}{\sqrt5}=0.723607\ldots,
\qquad
\tfrac{1-\bar\beta}{\bar\beta}=\varphi^{-2}=0.381966\ldots,
\]
the identity $\bar\beta=\varphi/\sqrt5$ following from $\varphi^{2}=\varphi+1$ and $2\varphi-1=\sqrt5$. Thus a uniformly random element of $B(n)$ has cursor at distance $\approx n\varphi/\sqrt5$, hull length $\approx n\varphi/\sqrt5$ as well, so no macroscopic backtracking, and lamp density $\varphi^{-2}$ inside its hull; while conditioning the cursor to any height below $0.699\,n$ forces macroscopic backtracking and lamp density $0.430\ldots$, whatever the height.
\end{example}

Bucher and Talambutsa observe that the second largest growth rate of $L_2$ over its generating sets is the plastic number, and ask for a natural geometric reason for its appearance (\cite{BT17}, Remark 9, where it is also noted that $\rho$ is the minimal growth rate of $\mathrm{GL}_2(\Z)$). Corollary \ref{cor:gap} gives an occurrence of $\rho$ inside the \emph{standard} Cayley graph of $L_2$, as the exponential rate at which ball elements return the cursor to the origin. Whether the two occurrences are related is an open question.

The family of spectra also degenerates in a controlled way as the lamp group grows.

\begin{proposition}[Tropical limit]\label{prop:tropical}
As $r\to\infty$,
\[
\sup_{\beta\in[-1,1]}\ \Big|\frac{I_r(\beta)}{\log r}-\min\Big(\frac{1+|\beta|}{3},\,1-|\beta|\Big)\Big|\;=\;O\Big(\frac1{\log r}\Big),
\]
and $\beta^{*}_r\to\tfrac12$, $\bar\beta_r\to\tfrac12$: in the large-lamp limit the spectrum converges to a piecewise-linear profile whose single kink absorbs both the transition and the peak.
\end{proposition}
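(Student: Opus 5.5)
We start from the closed form of Theorem \ref{thm:spectrum} and compare each branch separately with the candidate profile $g(\beta)=\min\big((1+|\beta|)/3,\,1-|\beta|\big)$. Both $I_r$ and $g$ are even, so it suffices to treat $\beta\in[0,1]$. The first step is to get asymptotics of the constants as $r\to\infty$.

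\textbf{Asymptotics of the constants.} From $\rho^{3}=\rho+r$ we get $\rho^3=r(1+O(r^{-2/3}))$, so $\log\rho=\tfrac13\log r+O(r^{-2/3})$. Next, $u_r=r/(r+\rho)=1-O(r^{-2/3})$, hence $\beta^{*}_r=1/(1+u_r)\to\tfrac12$. Similarly $\omega^2=\omega+r$ gives $\bar\beta_r=\omega^2/(\omega^2+r)=1-r/(\omega^2+r)\to\tfrac12$, since $r/\omega^2\to1$.

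\textbf{Affine branch, $0\le\beta\le\beta^{*}_r$.} Here $I_r(\beta)/\log r=(1+\beta)/3+O(r^{-2/3}/\log r)$, uniformly in $\beta$.

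\textbf{Ballistic branch, $\beta^{*}_r\le\beta\le1$.} Write $u=(1-\beta)/\beta\in[0,1]$ and set $H(u)=-u\log u-(1-u)\log(1-u)$, which satisfies $0\le H\le\log2$. Then
\[
I_r(\beta)=\beta\,h(u)=(1-\beta)\log r+\beta H(u),
\]
so $\big|I_r(\beta)/\log r-(1-\beta)\big|\le \log 2/\log r$ uniformly on this range. This is where the $O(1/\log r)$ rate comes from; it is the dominant error term.

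\textbf{Matching with $g$.} It remains to compare $g$ with the branch we have been using. The two linear pieces of $g$ cross at $\beta=\tfrac12$: we have $g=(1+\beta)/3$ for $\beta\le\tfrac12$ and $g=1-\beta$ for $\beta\ge\tfrac12$. On the gap between $\beta^{*}_r$ and $\tfrac12$ we would be using the "wrong" branch. However, both linear functions are $1$-Lipschitz and agree at $\tfrac12$, so the mismatch there is at most $2|\beta^{*}_r-\tfrac12|=O(r^{-2/3})$. This holds because $\beta^{*}_r-\tfrac12=(1-u_r)/(2(1+u_r))=O(r^{-2/3})$, which is $O(1/\log r)$.

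Combining the three estimates gives the uniform bound. The tropical statements about $\beta^{*}_r$ and $\bar\beta_r$ were already obtained in the first step. No step is really an obstacle. The only care needed is to keep the bound on the entropy term $\beta H(u)$ uniform up to $\beta=\beta^{*}_r$, and that follows from $H\le\log 2$.
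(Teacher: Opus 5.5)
Your proof is correct and follows essentially the same route as the paper's. Both arguments use the same steps: asymptotics of $\rho_r,u_r,\beta^*_r,\bar\beta_r$ from the cubic and the quadratic, the uniform $O(r^{-2/3}/\log r)$ error on the affine branch, the splitting $h(u)=u\log r+H(u)$ with $0\le H\le\log 2$ on the ballistic branch, and a Lipschitz comparison on the interval of width $O(r^{-2/3})$ between $\tfrac12$ and $\beta^*_r$ (which lies above $\tfrac12$ by your formula $\beta^*_r-\tfrac12=(1-u_r)/(2(1+u_r))>0$).
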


The method is elementary throughout. That the growth series of $F\wr\Z$ is rational is classical \cite{Parry92,Bartholdi17}; Theorem \ref{thm:series} refines that series by the cursor variable, and the phase transition can then be read off its singularities.

\section{The exact length formula and fiber counts}\label{sec:exact}

Word length in $L_F$ is given by an exact formula, not merely up to bounded error, and everything below rests on this. In any wreath product with base group $Q$ and the standard generators, the length of an element is the total length of its lamp values plus the length of a shortest path in $\mathrm{Cay}(Q,S_Q)$ that starts at the identity, visits the support of the lamp configuration, and ends at the base coordinate; this is classical, going back to Parry \cite{Parry92} (see \cite{Bartholdi17}, and \cite{Bodart22} for a recent account). For $Q=\Z$ the traveling-salesman term is explicit, which is what makes the present computation possible. This section records the formula in that form (Lemma \ref{lem:length}), converts it into a closed-form count of each fiber by decomposing an element into a core and two hull extensions (Lemma \ref{lem:count}), and states the entropy estimates used to pass from counts to exponential rates.

Elements of $L_F=F\wr\Z$ are pairs $x=(\eta,j)$, where $\eta\colon\Z\to F$ is finitely supported and $j\in\Z$; multiplication is $(\eta,j)(\eta',j')=(\eta+\tau_j\eta',\,j+j')$ with $\tau_j\eta'(i)=\eta'(i-j)$. The generator $t=(0,1)$ moves the cursor, and $c\in F\setminus\{e\}$ is identified with $(\delta^{c},0)$, where $\delta^{c}$ is the configuration equal to $c$ at the origin and trivial elsewhere, so that $c$ multiplies the lamp at the cursor by $c$. Write $m=m(x)=|\supp\eta|$, let $[a,b]$ be the smallest interval containing $\supp\eta\cup\{0,j\}$, and set $L=L(x)=b-a$. Throughout, the letter $i$ denotes a site of $\Z$; the variable $s$ is reserved for the generating function of Section \ref{sec:series}.

\begin{lemma}[Length formula]\label{lem:length}
For every $x=(\eta,j)$, $\ |x|=m+2L-|j|$.
\end{lemma}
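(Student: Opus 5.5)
We prove the two inequalities $|x|\le m+2L-|j|$ and $|x|\ge m+2L-|j|$ separately, reading a word in $S$ as a walk of the cursor on $\Z$ together with lamp operations at the cursor's current position.

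\emph{Upper bound.} By the symmetry $i\mapsto -i$ we may assume $j\ge0$, so $a\le 0\le j\le b$. Consider the path that starts at $0$, goes left to $a$, then right to $b$, then back left to $j$. Its length is
\[
|a|+(b-a)+(b-j)=2L-j,
\]
using $L=b-a$. This path visits every site of $[a,b]\supseteq\supp\eta$. At the first visit to each $i\in\supp\eta$ we insert the single generator $\eta(i)\in F\setminus\{e\}$. The resulting word has length $m+2L-|j|$ and represents $x$: at the end the cursor sits at $j$, and each site carries the product of the generators applied there, which is $\eta(i)$.

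\emph{Lower bound.} Let $w$ be any word representing $x$, and split its letters into $t^{\pm1}$-letters and lamp letters.
\begin{itemize}
\item Each $i\in\supp\eta$ needs at least one lamp letter applied while the cursor is at $i$, since otherwise the lamp at $i$ stays $e$. This gives at least $m$ lamp letters.
\item The cursor trajectory is a nearest-neighbour path on $\Z$ from $0$ to $j$. It must visit every site of $\supp\eta$, so it visits both $a$ and $b$, because each of $a,b$ lies in $\supp\eta\cup\{0,j\}$.
\end{itemize}
It remains to show that a path from $0$ to $j$ on $\Z$ that visits $a$ and $b$ (with $a\le\min(0,j)$ and $b\ge\max(0,j)$) has at least $2L-|j|$ steps. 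This is the only step with any content. Suppose $a$ is reached before $b$, the other order being symmetric. The path then travels from $0$ to $a$, from $a$ to $b$, and from $b$ to $j$, so its length is at least
\[
(0-a)+(b-a)+(b-j)=2(b-a)-(a+j)+\ldots
\]
More cleanly, the bound is $|a|+L+|b-j|$. For $j\ge0$ this equals $-a+b-a+b-j=2L-j$. The other order gives $b+L+(j-a)=2L+j\ge 2L-j$.

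By the reflection symmetry the same holds for $j<0$, so the path has at least $2L-|j|$ steps. Adding the two contributions, $w$ has at least $m+2L-|j|$ letters, which is the lower bound. Together with the upper bound this gives $|x|=m+2L-|j|$.

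I expect no real obstacle. The only care needed is handling the degenerate cases, where $a=0$ or $b=j$, or $\supp\eta$ is empty. These are covered automatically because $0$ and $j$ are included in the hull $[a,b]$.
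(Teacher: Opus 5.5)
Your proof is correct and follows the paper's argument essentially step for step. The upper bound uses the same $0\to a\to b\to j$ walk with one lamp letter per support site. The lower bound uses the same count of at least $m$ lamp letters plus the comparison of the two visiting orders, $\min(2L-j,\,2L+j)$. The only blemish is the abandoned display $(0-a)+(b-a)+(b-j)=2(b-a)-(a+j)+\ldots$, which is false as written (the left side is simply $2L-j$). Delete it, since the following line does the computation correctly.
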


This is the classical formula in the case $Q=\Z$, where the traveling-salesman term evaluates to $2L-|j|$; the proof is included to fix the notation used throughout.

\begin{proof}
\emph{Lower bound.} Fix a word of length $\ell$ representing $x$ and follow the cursor: it traces a nearest-neighbor path on $\Z$ from $0$ to $j$. The final lamp value at a site $i$ is the product of the letters applied while the cursor stood at $i$, so if $\eta(i)\neq e$ then at least one lamp letter was applied at $i$; hence the path visits every site of $\supp\eta$, and in particular visits $a$ and $b$, each of which lies in $\supp\eta\cup\{0,j\}$. A path from $0$ to $j$ visiting $a$ and $b$ has length at least
$\min\big(|a|+(b-a)+|b-j|,\ |b|+(b-a)+|j-a|\big)$,
according to which extreme is visited first. For $j\ge0$, where $a\le0\le j\le b$, the two options are $-a+L+(b-j)=2L-j$ and $b+L+(j-a)=2L+j$, so the minimum is $2L-j$; the case $j\le0$ is the mirror image, giving $2L+j$. The word also contains at least $m$ lamp letters, one for each site of $\supp\eta$, whence $\ell\ge m+2L-|j|$.

\emph{Upper bound.} Assume $j\ge0$, the case $j\le0$ being the mirror image. Walk $0\to a$, then $a\to b$, then $b\to j$, applying at each site $i\in\supp\eta$, on its first visit, the single lamp letter $\eta(i)\in F\setminus\{e\}$. This word represents $x$ and uses $-a+(b-a)+(b-j)=2L-j$ moves together with $m$ lamp letters. The construction covers $j=0$, where the path is $0\to a\to b\to0$ of length $2L$, and the degenerate case $L=0$, where the path is empty and $|x|=m\in\{0,1\}$ according as $x=e$ or $x$ is a single lamp letter at the origin.
\end{proof}

Lemma \ref{lem:length} makes the fiber counts computable in closed form. For $j\ge0$ record each $x$ in the fiber by its hull extensions $e_-=\min(0,j)-a\ge0$ and $e_+=b-\max(0,j)\ge0$, so that $L=j+e_-+e_+$; if $e_->0$ then the site $a$ carries a nontrivial lamp, since otherwise the hull would be smaller, and similarly for $e_+$.

\begin{lemma}[Exact fiber count]\label{lem:count}
For $0\le j\le n$,
\[
N_j(n)\;=\;\sum_{e_-,e_+\ge0}\ r^{f}\!\!\sum_{k=0}^{K(e_-,e_+)}\binom{L+1-f}{k}r^{k},
\]
where $L=j+e_-+e_+$, $f=\mathbf 1_{\{e_->0\}}+\mathbf 1_{\{e_+>0\}}$ and
$K=\min\big(L+1-f,\ n-(2L-j)-f\big)$, with the convention that a sum with negative upper limit is $0$.
\end{lemma}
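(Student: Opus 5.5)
The plan is to count the fiber $\{x:\pi(x)=j,\ |x|\le n\}$ for fixed $0\le j\le n$ by first fixing the geometric data $(e_-,e_+)$ and then counting lamp configurations. By Lemma \ref{lem:length}, $|x|=m+2L-|j|$. Once $j\ge0$ and the hull extensions are fixed, $L=j+e_-+e_+$ is determined, and so is the hull $[a,b]=[-e_-,\,j+e_+]$. The map $x\mapsto(e_-,e_+,\eta)$ is a bijection from the fiber over $j$ onto the set of triples such that $\supp\eta\subset[a,b]$, with $\eta(a)\ne e$ whenever $e_->0$ and $\eta(b)\ne e$ whenever $e_+>0$. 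These conditions say exactly that $[a,b]$ is the smallest interval containing $\supp\eta\cup\{0,j\}$. When $e_-=0$ we have $a=0$, which is already in the hull, so no constraint is imposed on $\eta(0)$; the same holds at $b=j$ when $e_+=0$.

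Next I fix the pair $(e_-,e_+)$ and count admissible $\eta$ with $m+2L-j\le n$. The hull contains $L+1$ sites. Of these, $f$ endpoint sites are forced to be nontrivial. I need to check that these are distinct sites. If both $e_\pm>0$ then $a<0\le j<b$, so $a\ne b$. If only one of them is positive, there is one forced site. Each forced site contributes $r$ choices of nontrivial lamp value, which gives the factor $r^f$.

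The remaining $L+1-f$ sites are free, and I let $k$ be the number of them carrying a nontrivial lamp. There are $\binom{L+1-f}{k}r^k$ such configurations, and then $m=f+k$. The length constraint $f+k+2L-j\le n$ becomes $k\le n-(2L-j)-f$. Together with the trivial bound $k\le L+1-f$, this gives the upper limit $K$. If $n-(2L-j)-f<0$, no configuration qualifies, and this matches the convention for sums with negative upper limit.

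Summing over all $(e_-,e_+)$ gives the formula. The sum is effectively finite, since $2L-j\le n$ forces $e_-+e_+\le (n-j)/2$. The only delicate step is the degenerate boundary case, and I will check it explicitly. When $j=0$ and $e_-=e_+=0$, the hull is the single site $\{0\}$ and $L=0$. Here the free site count $L+1-f=1$ correctly allows the lamp at the origin to be on or off. Similarly, when $j>0$ the endpoints $0$ and $j$ are free sites whenever the corresponding extension vanishes. With this checked, injectivity and surjectivity of the decomposition follow directly from the definition of the hull, and the count is exact.
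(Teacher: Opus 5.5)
Your proof is correct and is essentially the paper's argument: you fix $(e_-,e_+)$, which determines the hull, give $r$ choices to each of the $f$ forced endpoint lamps, count the $L+1-f$ free sites by $\binom{L+1-f}{k}r^{k}$, and use Lemma \ref{lem:length} to turn $|x|\le n$ into $k\le K$. Your explicit checks that $x\mapsto(e_-,e_+,\eta)$ is a bijection, that the two forced sites are distinct, and that the degenerate cases are handled correctly fill in details the paper leaves implicit.
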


\begin{proof}
By Lemma \ref{lem:length}, $x=(\eta,j)$ lies in $B(n)$ if and only if $m+2L-j\le n$. Given $(e_-,e_+)$ the hull is determined; the $f$ forced endpoint lamps take any of $r$ nontrivial values each; the remaining $L+1-f$ sites carry an arbitrary configuration with $k$ nontrivial lamps of $r$ values each; and the length constraint reads $k+f+2L-j\le n$, that is, $k\le K$.
\end{proof}

The standard entropy estimates read, for integers $0\le k\le M$,
\begin{equation}\label{eq:entropy}
\frac{e^{M h(k/M)}}{M+1}\ \le\ \binom Mk r^{k}\ \le\ e^{M h(k/M)} ,
\end{equation}
by the bound $\binom Mk\le 2^{M H_2(k/M)}\le(M+1)\binom Mk$, where $H_2$ is the binary entropy.

\section{The variational problem}\label{sec:variational}

Lemma \ref{lem:count} expresses each fiber count as a sum of $O(n^{2})$ binomial terms, so its exponential rate is the maximum of an entropy functional over a two-dimensional polytope of rescaled parameters, the hull length and the lamp count. This section solves that maximization in closed form. The budget constraint is always active, which reduces the problem to one variable, and the answer then depends on whether the unconstrained maximizer of an auxiliary function $\psi$ lies inside the admissible range. That dichotomy is the origin of the phase transition, and it is also where the two critical exponents appear: the interior maximizer forces the cubic $\rho^{3}=\rho+r$, while the boundary regime later forces the quadratic $\omega^{2}=\omega+r$.

For $\beta\in[0,1]$ let
\[
\Delta_\beta=\Big\{(\ell,\mu):\ \ell\ge\beta,\ \ 0\le\mu\le\ell,\ \ \mu+2\ell-\beta\le1\Big\},
\qquad
\Psi(\beta)=\sup_{(\ell,\mu)\in \Delta_\beta}\ \ell\, h(\mu/\ell),
\]
with the convention $\ell h(\mu/\ell)=0$ when $\ell=0$; the parameters correspond to $L\approx\ell n$ and $m\approx\mu n$. The set $\Delta_\beta$ is a nonempty compact polytope depending continuously on $\beta$, and $(\ell,\mu)\mapsto\ell h(\mu/\ell)$ is continuous on $\{0\le\mu\le\ell\le1\}$, so $\Psi$ is continuous on $[0,1]$.

\begin{proposition}\label{prop:variational}
For every $\beta\in[0,1]$, $\ \Psi(\beta)=I(\beta)$ as defined in Theorem \ref{thm:spectrum}, the maximizer is unique, and it equals
\[
(\ell_\beta,\mu_\beta)=
\Big(\tfrac{1+\beta}{2+u_r},\ \tfrac{(1+\beta)u_r}{2+u_r}\Big)\ \ (\beta\le\beta^{*}),
\qquad
(\ell_\beta,\mu_\beta)=\big(\beta,\ 1-\beta\big)\ \ (\beta\ge\beta^{*}).
\]
\end{proposition}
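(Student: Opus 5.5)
The plan is to reduce $\Psi(\beta)$ to a one-variable problem in the lamp density $u=\mu/\ell$ and then read off the two regimes from where the unconstrained maximizer sits.

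\emph{Step 1: the budget constraint is active.} Put $g(\ell,\mu)=\ell\,h(\mu/\ell)$, the perspective of the concave function $h$. A direct computation gives $\partial_\ell g=h(u)-u\,h'(u)=-\log(1-u)$, where $u=\mu/\ell$ and $h'(u)=\log r+\log\frac{1-u}{u}$. This is strictly positive for $u>0$. Suppose $(\ell,\mu)\in\Delta_\beta$ has slack $\mu+2\ell-\beta<1$. Increasing $\ell$ with $\mu$ fixed stays in $\Delta_\beta$, because $\ell\ge\beta$ and $\mu\le\ell$ are preserved. It also strictly increases $g$ when $\mu>0$. The points with $\mu=0$ give $g=0$ and are beaten by any point with $\mu>0$. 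Hence every maximizer satisfies $\mu=1+\beta-2\ell$.

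\emph{Step 2: change of variable.} On the active face, set $u=\mu/\ell$, so that $\ell=\frac{1+\beta}{2+u}$. Then $g=(1+\beta)\varphi(u)$ with $\varphi(u)=\frac{h(u)}{2+u}$. The constraints translate as follows:
\[
\mu\le\ell\iff u\le1,\qquad \mu\ge0\iff u\ge0,\qquad \ell\ge\beta\iff u\le\frac{1-\beta}{\beta}.
\]
So $\Psi(\beta)=(1+\beta)\max\{\varphi(u):0\le u\le \min(1,(1-\beta)/\beta)\}$. The map $u\mapsto(\ell,\mu)$ is injective, so uniqueness of the maximizer in $u$ gives uniqueness in $(\ell,\mu)$.

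\emph{Step 3: shape of $\varphi$.} The sign of $\varphi'$ is the sign of $q(u)=(2+u)h'(u)-h(u)$. Since $q'(u)=(2+u)h''(u)<0$, $q$ is strictly decreasing. Moreover $q(0^+)=+\infty$ and $q(1^-)=-\infty$. Hence $\varphi$ is strictly unimodal on $[0,1]$ with a unique critical point $u_0$.

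Rewrite $q=0$ as $h-uh'=2h'$, that is,
\[
-\log(1-u)=2\log\frac{r(1-u)}{u},\qquad\text{equivalently}\qquad u^2=r^2(1-u)^3.
\]
Substituting $u=r/(r+\rho)$ and $1-u=\rho/(r+\rho)$ turns this into $\rho^3=\rho+r$. So $u_0=u_r$.

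At $u_r$ we have $\varphi(u_r)=h'(u_r)=\log\frac{r(1-u_r)}{u_r}=\log\rho$. Note also $u_r<1$.

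\emph{Step 4: the two regimes.}
\begin{itemize}
\item If $\beta\le\beta^*$, equivalently $(1-\beta)/\beta\ge u_r$, the unconstrained maximizer $u_r$ is admissible. This gives $\Psi(\beta)=(1+\beta)\log\rho$ with $\ell_\beta=\frac{1+\beta}{2+u_r}$ and $\mu_\beta=u_r\ell_\beta$.
\item If $\beta\ge\beta^*$, then $\varphi$ is strictly increasing on $[0,(1-\beta)/\beta]$, which lies inside $[0,u_r]\subset[0,1]$. The maximum is therefore at the endpoint $u=(1-\beta)/\beta$. This gives $\ell=\beta$, $\mu=1-\beta$ and $\Psi(\beta)=\beta\,h\big(\frac{1-\beta}{\beta}\big)$.
\end{itemize}

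\emph{Edge cases.} At $\beta=0$ the cap on $u$ is $1$, and at $\beta=1$ the only point is $(1,0)$; both are handled by the same formulas. At $\beta=\beta^*$ the two regimes agree.

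The main obstacle I expect is Step 1 near degenerate corners, namely $\ell=0$ or $\mu=0$, where $g$ is not differentiable. There I would argue by comparison of values rather than by derivatives. The algebraic identification of $u_0$ with $u_r$ is routine once the critical-point equation is put in the form $h-uh'=2h'$.
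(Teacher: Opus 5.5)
Your proposal is correct and follows the paper's proof essentially step for step. Both arguments show the budget binds because $\partial_\ell\,\ell h(\mu/\ell)=-\log(1-u)>0$. Both then reduce to maximizing $h(u)/(2+u)$ over $[0,\min(1,(1-\beta)/\beta)]$. Both get strict unimodality from the decreasing function $(2+u)h'-h$, and both split into an interior phase and a boundary phase at $\beta^{*}=1/(1+u_r)$.

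Your identification of the critical point differs only slightly. You rewrite it as $h-uh'=2h'$, i.e.\ $u^{2}=r^{2}(1-u)^{3}$, which is a marginally quicker route to $\rho^{3}=\rho+r$ than the paper's computation of $h(u_r)$. The only blemish is cosmetic. For $r=1$ the points with $\mu=\ell$ also have value $h(1)=0$, so to rule out $\mu=0$ you should compare with a point whose density lies strictly between $0$ and $\min(1,(1-\beta)/\beta)$, not with an arbitrary point having $\mu>0$.
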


\begin{proof}
\emph{Positivity.} Let $\beta\in[0,1)$ and pick any $u\in(0,u_{\max}(\beta)]$, where $u_{\max}(\beta)=\min\big(1,(1-\beta)/\beta\big)$ and $u_{\max}(0):=1$. Setting $\ell=(1+\beta)/(2+u)$ and $\mu=u\ell$ gives $\mu+2\ell-\beta=\ell(2+u)-\beta=1$ and $\ell\ge\beta$, so $(\ell,\mu)\in \Delta_\beta$ and $\Psi(\beta)\ge\ell h(u)>0$.

\emph{The budget binds.} For $0<u\le1$,
\begin{equation}\label{eq:partials}
\frac{\partial}{\partial\ell}\,\ell h(\mu/\ell)\Big|_{\mu}\;=\;h(u)-u h'(u)\;=\;-\log(1-u)\;>\;0,
\qquad u=\mu/\ell ,
\end{equation}
interpreted as $+\infty$ at $u=1$. Let $(\ell,\mu)$ maximize. If $\mu=0$ the value is $0$, which by positivity is impossible for $\beta<1$; and for $\beta=1$ the constraints $\ell\ge1$ and $\mu+2\ell\le2$ force $\ell=1$, $\mu=0$, where the budget binds. So assume $\mu>0$. If $\mu+2\ell-\beta<1$, increasing $\ell$ slightly keeps $\ell\ge\beta$ and $\mu\le\ell$, keeps the budget satisfied, and strictly increases the value by \eqref{eq:partials}, a contradiction. Hence $\mu+2\ell-\beta=1$ at every maximizer.

\emph{Reduction to one variable.} On the budget face, writing $u=\mu/\ell\in[0,1]$ gives $\ell(2+u)=1+\beta$, so the constraint $\ell\ge\beta$ reads $\beta(2+u)\le1+\beta$, that is $u\le(1-\beta)/\beta$, vacuous at $\beta=0$. Therefore
\[
\Psi(\beta)=(1+\beta)\,\max_{0\le u\le u_{\max}(\beta)}\psi(u),
\qquad
\psi(u)=\frac{h(u)}{2+u}.
\]

\emph{The unconstrained maximum of $\psi$.} Let $g(u)=(2+u)h'(u)-h(u)$, so $\psi'(u)=g(u)/(2+u)^2$. Then $g'(u)=(2+u)h''(u)<0$, while $h'(u)=\log\frac{r(1-u)}{u}$ gives $g(0^{+})=+\infty$ and $g(1^{-})=-\infty$. By the intermediate value theorem $g$ has a zero, unique since $g$ is strictly decreasing; call it $u_r\in(0,1)$. Thus $\psi$ increases on $[0,u_r]$ and decreases on $[u_r,1]$, and
\begin{equation}\label{eq:critical}
\kappa:=\psi(u_r)=h'(u_r).
\end{equation}
Set $\rho=e^{\kappa}$. From $h'(u)=\log\frac{r(1-u)}{u}$, equation \eqref{eq:critical} gives $r(1-u_r)/u_r=\rho$, that is $u_r=r/(r+\rho)$, and then $1-u_r=\rho/(r+\rho)$. Substituting into $h$,
\[
h(u_r)=u_r\log\tfrac{r+\rho}{r}+(1-u_r)\log\tfrac{r+\rho}{\rho}+u_r\log r
=\log(r+\rho)-(1-u_r)\log\rho ,
\]
while \eqref{eq:critical} reads $h(u_r)=\kappa(2+u_r)=(2+u_r)\log\rho$. Equating the two and cancelling,
\[
\log(r+\rho)=(2+u_r)\log\rho+(1-u_r)\log\rho=3\log\rho,
\]
that is
\begin{equation}\label{eq:cubic}
\rho^{3}=\rho+r .
\end{equation}
Since $z\mapsto z^{3}-z$ is strictly increasing for $z\ge1$ and vanishes at $z=1$, the root $\rho=\rho_r>1$ of \eqref{eq:cubic} is unique, and $\kappa=\log\rho_r$.

\emph{The two phases.} If $u_r\le u_{\max}(\beta)$, equivalently $\beta\le1/(1+u_r)=\beta^{*}$, the maximum is interior: $\Psi(\beta)=(1+\beta)\kappa=(1+\beta)\log\rho$, with unique maximizer $u=u_r$ and $\ell_\beta=(1+\beta)/(2+u_r)$. If $\beta>\beta^{*}$, the constraint binds, and since $\psi$ is strictly increasing on $[0,u_r]$ the unique maximizer is $u=(1-\beta)/\beta$; there $2+u=(1+\beta)/\beta$, so
\[
\Psi(\beta)=(1+\beta)\,\frac{h(u)}{(1+\beta)/\beta}=\beta\,h\Big(\tfrac{1-\beta}\beta\Big),
\qquad
\ell_\beta=\beta,\quad \mu_\beta=1-\beta .
\]
Finally $\beta^{*}=1/(1+u_r)=(r+\rho)/(2r+\rho)=\rho^{3}/(\rho^{3}+r)$ by \eqref{eq:cubic}. Uniqueness of the maximizer follows from the strict unimodality of $\psi$ together with \eqref{eq:partials}.
\end{proof}

\begin{proof}[Proof of Theorem \ref{thm:spectrum}]
Evenness holds because the automorphism inverting $t$ and fixing $F$ maps the fiber over $j$ isometrically onto the fiber over $-j$; so assume $j_n\ge0$ and $j_n/n\to\beta$.

\emph{Upper bound.} In Lemma \ref{lem:count} every feasible pair satisfies $2L-j_n\le n$, so there are at most $(n+1)^2$ pairs $(e_-,e_+)$ and at most $n+2$ values of $k$, and by \eqref{eq:entropy} each term is at most $r^{2}e^{(L+1-f)h(k/(L+1-f))}$. The point $\big(\tfrac{L+1-f}{n},\tfrac{k+f}{n}\big)$ lies within $O(1/n)$ of $P_{j_n/n}$; by uniform continuity of $(\ell,\mu)\mapsto\ell h(\mu/\ell)$ on $\{0\le\mu\le\ell\le2\}$ and continuity of $\beta\mapsto \Delta_\beta$, given $\varepsilon>0$ every exponent is at most $n(\Psi(\beta)+\varepsilon)$ for $n$ large. Hence $N_{j_n}(n)\le r^{2}(n+1)^{2}(n+2)e^{n(\Psi(\beta)+\varepsilon)}$, so $\limsup\frac1n\log N_{j_n}(n)\le\Psi(\beta)$.

\emph{Lower bound.} Let $(\ell_\beta,\mu_\beta)$ be the maximizer and fix $\varepsilon>0$. Take $e_-=0$, $e_+=\max(0,\lceil\ell_\beta n\rceil-j_n)$ and $k=\lfloor\mu_\beta n\rfloor-f-3$, which is nonnegative for large $n$ because $\mu_\beta>0$ when $\beta<1$; the case $\beta=1$ is trivial since $I(1)=0$ and $N_{j_n}(n)\ge1$. For large $n$ the constraint $k+f+2L-j_n\le n$ holds, since $\mu_\beta+2\ell_\beta-\beta\le1$, the roundings cost at most $5$, which the subtracted $3$ and the slack from $j_n=\beta n+o(n)$ absorb. By \eqref{eq:entropy} and continuity, the single term of Lemma \ref{lem:count} indexed by this choice is at least $e^{n(\Psi(\beta)-\varepsilon)}$ for $n$ large, so $\liminf\frac1n\log N_{j_n}(n)\ge\Psi(\beta)$. Proposition \ref{prop:variational} completes the proof for balls.

For spheres, increase $k$ by at most $2$ so that $k+f+2L-j_n=n$ exactly, which changes the exponent by $O(1)$; the resulting elements have length exactly $n$, giving the same lower bound, and the upper bound is inherited from the ball counts.
\end{proof}

\begin{proof}[Proof of Theorem \ref{thm:transition}]
On $[0,\beta^{*}]$ the function $I$ is affine with slope $\kappa=\log\rho$. On $(\beta^{*},1)$ put $u=u(\beta)=(1-\beta)/\beta$, so that $u'=-1/\beta^{2}$ and $I(\beta)=\beta h(u)$; then
\[
I'(\beta)=h(u)-\frac{h'(u)}{\beta},
\qquad
I''(\beta)=h'(u)u'-\Big(\frac{h''(u)u'}{\beta}-\frac{h'(u)}{\beta^{2}}\Big)
=\frac{h''(u)}{\beta^{3}}\;<\;0 ,
\]
since $h''(u)=-1/(u(1-u))<0$, so this branch is strictly concave and real-analytic. At $\beta^{*}$ one has $u=u_r$ and $1/\beta^{*}=1+u_r$, so \eqref{eq:critical} gives
\[
I'(\beta^{*+})=h(u_r)-(1+u_r)h'(u_r)=\kappa(2+u_r)-(1+u_r)\kappa=\kappa ,
\]
matching the affine slope, so $I\in C^{1}$, while $I''$ jumps from $0$ to $h''(u_r)/(\beta^{*})^{3}\neq0$. Concavity on $[-1,1]$ follows because $I'$ is continuous, constant on $[0,\beta^{*}]$, decreasing thereafter, and $I$ is even.

\emph{The peak.} On the strictly concave branch the equation $I'(\bar\beta)=0$ reads $h(\bar u)=(1+\bar u)h'(\bar u)$ with $\bar u=(1-\bar\beta)/\bar\beta$. Since $I'(\beta^{*+})=\kappa>0$ and $I'(\beta)\to-\infty$ as $\beta\to1^{-}$, because $h'(u)\to+\infty$ as $u\to0^{+}$, such a $\bar\beta\in(\beta^{*},1)$ exists and is unique. Set $\omega=e^{h'(\bar u)}$, so that $r(1-\bar u)/\bar u=\omega$ and $\bar u=r/(r+\omega)$, $1-\bar u=\omega/(r+\omega)$. As in Proposition \ref{prop:variational},
\[
h(\bar u)=\bar u\log\tfrac{r+\omega}{r}+(1-\bar u)\log\tfrac{r+\omega}{\omega}+\bar u\log r=\log(r+\omega)-(1-\bar u)\log\omega ,
\]
so the equation $h(\bar u)=(1+\bar u)\log\omega$ becomes
\[
\log(r+\omega)-(1-\bar u)\log\omega=(1+\bar u)\log\omega,
\qquad\text{i.e.}\qquad
\log(r+\omega)=2\log\omega,
\]
that is
\begin{equation}\label{eq:quadratic}
\omega^{2}=\omega+r,
\end{equation}
so $\omega=\omega_r$. Since $\bar\beta(1+\bar u)=1$, it follows that $I(\bar\beta)=\bar\beta h(\bar u)=\bar\beta(1+\bar u)h'(\bar u)=h'(\bar u)=\log\omega$, and $\bar\beta=1/(1+\bar u)=(r+\omega)/(2r+\omega)=\omega^{2}/(\omega^{2}+r)$ by \eqref{eq:quadratic}. Finally $\omega>\rho$, because evaluating $z^{3}-z-r$ at $\omega$ gives $\omega^{3}-\omega-r=\omega^{3}-\omega^{2}=\omega^{2}(\omega-1)>0$ while the cubic is increasing at its root; hence
$\bar\beta=\omega^{2}/(\omega^{2}+r)>\rho^{3}/(\rho^{3}+r)=\beta^{*}$.
\end{proof}

\begin{corollary}[Growth rate, localized]\label{cor:growth}
$\lim_n V(n)^{1/n}=\omega_r=\tfrac{1+\sqrt{1+4r}}2$, and the fibers realizing the growth are exactly those with $j/n\to\pm\bar\beta$.
\end{corollary}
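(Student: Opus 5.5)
Since $V(n)=\sum_{|j|\le n}N_j(n)$ is a sum of at most $2n+1$ terms, the exponential rate of $V(n)$ is the largest exponential rate among the fibers. The task is therefore to bring the pointwise limits of Theorem \ref{thm:spectrum} to a uniform statement. We then read off the maximum from Theorem \ref{thm:transition}.

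\emph{Lower bound.} Take $j_n=\lfloor\bar\beta n\rfloor$. Theorem \ref{thm:spectrum} together with $I(\bar\beta)=\log\omega$ from Theorem \ref{thm:transition} gives
\[
\liminf_n \tfrac1n\log V(n)\ \ge\ \lim_n\tfrac1n\log N_{j_n}(n)=\log\omega .
\]

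\emph{Upper bound.} The main point is uniformity over $j$. Pick $j_n$ with $N_{j_n}(n)=\max_j N_j(n)$; then $V(n)\le(2n+1)N_{j_n}(n)$. Suppose $\limsup\frac1n\log V(n)>\log\omega$. Pass to a subsequence along which $\frac1n\log N_{j_n}(n)$ converges to a value exceeding $\log\omega$, and refine it so that $j_n/n\to\beta$ for some $\beta$, by compactness of $[-1,1]$.

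Theorem \ref{thm:spectrum} is stated for full sequences $j_n$ with $j_n/n\to\beta$. To apply it to our subsequence, extend $j_n$ to all $n$ by setting $j_n=\lfloor\beta n\rfloor$ off the subsequence. The extended sequence still satisfies $j_n/n\to\beta$, so along it $\frac1n\log N_{j_n}(n)\to I(\beta)\le\log\omega$. This contradicts the choice of subsequence. Hence $\lim V(n)^{1/n}=\omega$, and $\omega=\frac{1+\sqrt{1+4r}}2$ by \eqref{eq:quadratic}.

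\emph{Localization.} Theorem \ref{thm:transition} says the maximum $\log\omega$ is attained exactly at $\pm\bar\beta$. Run the same sequence-extension argument with any sequence $j_n$ whose ratios $j_n/n$ have a limit point $\beta\ne\pm\bar\beta$. Along that subsequence the fiber rate is $I(\beta)<\log\omega$. So the fibers with $\frac1n\log N_{j_n}(n)\to\log\omega$ are exactly those with $j_n/n\to\pm\bar\beta$. The only delicate step is the subsequence-extension trick, which is what lets Theorem \ref{thm:spectrum}, stated for full sequences, control the maximal fiber.
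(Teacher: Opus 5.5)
Your proof is correct and follows the paper's argument: the sandwich $\max_j N_j(n)\le V(n)\le(2n+1)\max_j N_j(n)$, Theorem \ref{thm:spectrum} applied along a maximizing sequence with compactness of $[-1,1]$, and Theorem \ref{thm:transition} for the value $\log\omega$ of the maximum and its location at $\pm\bar\beta$. Your subsequence-extension step spells out the compactness argument that the paper compresses into one line, and it shows that the paper's appeal to continuity of $I$ is not actually needed, since Theorem \ref{thm:spectrum} already holds for every sequence $j_n/n\to\beta$.
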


\begin{proof}
$\max_j N_j(n)\le V(n)\le(2n+1)\max_j N_j(n)$, and by Theorem \ref{thm:spectrum} with compactness and continuity of $I$, $\frac1n\log\max_jN_j(n)\to\max_\beta I=\log\omega$, attained only at $\pm\bar\beta$ by Theorem \ref{thm:transition}.
\end{proof}

For $r=1$ this recovers the golden-ratio growth of the classical lamplighter \cite{Parry92,LPP96}; the content added is the localization at $\pm\bar\beta$.

\section{The bivariate growth series}\label{sec:series}

The decomposition of an element into a core and two hull extensions was used in Lemma \ref{lem:count} one fiber at a time. Summing it over all fibers simultaneously, with a variable $y$ marking the cursor position, produces a closed rational form for the bivariate growth series. This yields a second and purely analytic account of the phase transition, as a competition between two families of singularities of which one does not move with $y$, and it identifies the relative growth series of the base group, proving Corollary \ref{cor:gap}.

\begin{proof}[Proof of Theorem \ref{thm:series}]
Fix $x=(\eta,j)$ with $j\ge0$ and split the hull $[a,b]$ into the left extension block of $e_-$ sites, the core $[0,j]$ of $j+1$ sites, and the right extension block of $e_+$ sites. By Lemma \ref{lem:length} the weight factorizes,
\[
s^{|x|}y^{\pi(x)}
=\big(s^{2e_-}\cdot\text{left-block lamps}\big)
\cdot\big(s^{j}y^{j}\cdot\text{core lamps}\big)
\cdot\big(s^{2e_+}\cdot\text{right-block lamps}\big),
\]
each nontrivial lamp contributing $rs$, namely a factor $s$ from the term $m$ in the length together with $r$ color choices, and each trivial site contributing $1$. A block with $e_\pm=E\ge1$ has its outermost site forced nontrivial and $E-1$ free sites, hence generating function $rs^{2E+1}(1+rs)^{E-1}$; summing the resulting geometric series,
\[
P(s)=1+\sum_{E\ge1}r\,s^{2E+1}(1+rs)^{E-1}=1+rs^{3}\sum_{k\ge0}\big(s^{2}(1+rs)\big)^{k}=1+\frac{rs^{3}}{1-s^{2}(1+rs)} .
\]
The core contributes $(1+rs)^{j+1}s^{j}y^{j}$. Multiplying the three factors and summing over $j\ge0$, then adding the mirror image for $j<0$ with $j=0$ counted once, yields the stated product.

All rearrangements are identities in $\Z[[s]][y,y^{-1}]$, since for fixed $n$ only finitely many terms contribute to the coefficient of $s^{n}$. Analytically, fix $y>0$; for $|s|$ small enough that $|s^{2}(1+rs)|<1$ and $|s(1+rs)|\max(y,y^{-1})<1$, both geometric series converge absolutely and the identity holds between analytic functions. The right-hand side is a rational function of $s$ and $y$, hence continues meromorphically in $s$ to all of $\C$, and the singularity analysis below is carried out on this continuation. Its poles are the zeros of $1-s^{2}(1+rs)$, contributed by $P$, and of $1-s(1+rs)y^{\pm1}$, contributed by the two geometric sums over $j$. Substituting $z=1/s$ into $s^{2}(1+rs)=1$ gives $z^{3}=z+r$, so the positive frozen root is $1/\rho$; substituting into $s(1+rs)=1$, the moving curve at $y=1$, gives $z^{2}=z+r$, with root $1/\omega$.

On the affine phase the fiber exponent $(1+|\beta|)\log\rho$ is linear in $\beta$ with slope $\log\rho$ determined by the frozen singularity alone, which does not move with $y$; a linear stretch of this kind is the standard signature of a $y$-independent dominant singularity in a Legendre transform. On the ballistic phase the exponent is the Legendre transform of the moving singularity and is strictly concave. The exchange of dominance occurs at $\beta^{*}$.
\end{proof}

\begin{proof}[Proof of Corollary \ref{cor:gap}]
The coefficient of $y^{0}$ in Theorem \ref{thm:series} is $(1+rs)P(s)^{2}$, a rational function whose coefficient of $s^{n}$ counts $\{x\in B_F:|x|=n\}$, so the relative growth series of $B_F$ is rational; dividing by $1-s$ gives the corresponding ball series, also rational. The exponential rate is the case $\beta=0$ of Theorem \ref{thm:spectrum}, namely $I(0)=\log\rho_r$, and $\omega>\rho$ was shown in the proof of Theorem \ref{thm:transition}. Since $L_F/B_F\cong\Z$ is amenable, $B_F$ is co-amenable.
\end{proof}

\begin{remark}\label{rem:mechanism}
The transition is visible from two sides. Combinatorially, producing an element over $\beta n$ costs $2\ell-\beta$ in cursor moves and $\mu$ in lamp letters per unit length and buys entropy $\ell h(\mu/\ell)$; by \eqref{eq:partials} the marginal entropy of one further site exceeds its cost even when the site lies beyond the target, as long as $|\beta|$ is small, so the optimal element overshoots and returns, with lamp density locked at the budget-free value $u_r$. At $\beta^{*}$ the cursor constraint absorbs the entire budget and overshooting becomes unaffordable, which is why the density must then fall to $(1-|\beta|)/|\beta|$. Analytically, the fiber exponent is governed by whichever singularity of $W(s,y)$ is nearest the origin, and the frozen one, being independent of $y$, produces the affine stretch.
\end{remark}

\section{Limit shape, concentration, large deviations}\label{sec:shape}

The variational problem of Section \ref{sec:variational} has a unique maximizer for each $\beta$, and a unique global maximizer once $\beta$ is also free. Uniqueness is what converts counting estimates into probabilistic statements: terms whose rescaled parameters stay away from the maximizer are exponentially negligible against the total, so a uniformly random element of the ball concentrates on the optimal profile. This section carries out that argument three times, giving the limit shape and large deviation principle of Theorem \ref{thm:shape}, the Gaussian constant of Corollary \ref{cor:concentration}, and the rigidity of the conditioned measure in the subballistic phase.

\begin{proof}[Proof of Theorem \ref{thm:shape}]
Consider $\Delta=\{(\beta,\ell,\mu):0\le\beta\le1,\ (\ell,\mu)\in\Delta_\beta\}$ and $\Phi(\beta,\ell,\mu)=\ell h(\mu/\ell)$ on it. By Proposition \ref{prop:variational} and Theorem \ref{thm:transition}, $\sup_\Delta\Phi=\log\omega$, attained at the single point $q^{*}=(\bar\beta,\bar\beta,1-\bar\beta)$: the maximum of $\Psi$ over $\beta$ is attained only at $\bar\beta$, and there the inner maximizer is unique. For $\varepsilon>0$ let $U_\varepsilon$ denote the open $\varepsilon$-neighborhood of $q^{*}$; by compactness and continuity, $\sup_{\Delta\setminus U_\varepsilon}\Phi\le\log\omega-\delta(\varepsilon)$ with $\delta(\varepsilon)>0$.

Each element of $B(n)$ contributes to exactly one term of the sum of Lemma \ref{lem:count}, taken jointly over $j$. By \eqref{eq:entropy} the total mass of the terms whose rescaled parameters lie outside $U_\varepsilon$ is at most $\mathrm{poly}(n)e^{n(\log\omega-\delta+o(1))}$, while $V(n)\ge e^{n(\log\omega-\delta/2)}$ for large $n$ by Corollary \ref{cor:growth}. Hence
\[
\Prob\Big(\big(\tfrac{|\pi(Z_n)|}n,\tfrac{L_n}n,\tfrac{m_n}n\big)\notin U_\varepsilon\Big)\ \le\ \mathrm{poly}(n)\,e^{-n\delta/2}.
\]
This is the stated concentration, and $L_n-|\pi(Z_n)|=o(n)$ because the $\ell$- and $\beta$-coordinates of $q^{*}$ agree.

\emph{Large deviations.} Write $\nu_n$ for the law of $\pi(Z_n)/n$ and $J=\log\omega-I$, a continuous nonnegative function by Theorems \ref{thm:spectrum} and \ref{thm:transition}. For closed $A\subset[-1,1]$,
\[
\nu_n(A)\ \le\ \frac{(2n+1)\max\{N_j(n):\ j/n\in A\}}{V(n)} .
\]
Choose $j_n$ with $j_n/n\in A$ realizing the maximum; by compactness a subsequence has $j_n/n\to\beta_0\in A$, and along it $\frac1n\log N_{j_n}(n)\to I(\beta_0)\le\sup_{A}I$ by Theorem \ref{thm:spectrum}. With Corollary \ref{cor:growth} this gives
$\limsup\frac1n\log\nu_n(A)\le\sup_A I-\log\omega=-\inf_A J$.
Now let $G\subset[-1,1]$ be open and $\beta\in G$. Choose integers $j_n$ with $j_n/n\to\beta$ and $j_n/n\in G$ for large $n$. Then
\[
\liminf_n\tfrac1n\log\nu_n(G)\ \ge\ \liminf_n\tfrac1n\log\big(N_{j_n}(n)/V(n)\big)\ =\ I(\beta)-\log\omega\ =\ -J(\beta),
\]
and taking the supremum over $\beta\in G$ gives $\liminf\frac1n\log\nu_n(G)\ge-\inf_G J$. Since $[-1,1]$ is compact and $J$ is continuous with compact sublevel sets, this is a full large deviation principle with speed $n$ and good rate function $J$, which vanishes exactly at $\pm\bar\beta$ and is affine on $[-\beta^{*},\beta^{*}]$.
\end{proof}

\begin{proof}[Proof of Corollary \ref{cor:concentration}]
On the ballistic branch $J$ is real-analytic with $J(\bar\beta)=J'(\bar\beta)=0$ and $J''(\bar\beta)=-I''(\bar\beta)=-h''(\bar u)/\bar\beta^{3}$. Now $h''(u)=-1/(u(1-u))$ and, using $r+\omega=\omega^{2}$,
\[
\bar u(1-\bar u)=\frac{r}{r+\omega}\cdot\frac{\omega}{r+\omega}=\frac{r\omega}{\omega^{4}}=\frac r{\omega^{3}},
\qquad
\bar\beta=\frac{\omega^{2}}{\omega^{2}+r}=\frac{\omega^{2}}{2\omega^{2}-\omega}=\frac{\omega}{2\omega-1},
\]
so $J''(\bar\beta)=\frac{\omega^{3}}{r}\cdot\frac{(2\omega-1)^{3}}{\omega^{3}}=\frac{(2\omega-1)^{3}}r=\frac{(1+4r)^{3/2}}r$, since $2\omega-1=\sqrt{1+4r}$. By Taylor's theorem, given $\varepsilon>0$ there is $\delta>0$ with $J(\beta)\ge\big(\tfrac{(1+4r)^{3/2}}{2r}-\varepsilon\big)(|\beta|-\bar\beta)^{2}$ whenever $\big||\beta|-\bar\beta\big|\le\delta$; outside that range $J\ge\min\big(J(\bar\beta-\delta),J(\bar\beta+\delta)\big)$, which after shrinking $\delta$ dominates the same quadratic for $s\le\delta$. Applying the large deviation upper bound of Theorem \ref{thm:shape} to the closed set $\{\beta:\ ||\beta|-\bar\beta|\ge s\}$, with the polynomial prefactors of that proof made explicit, gives the stated inequality.
\end{proof}

\begin{theorem}[Subballistic rigidity]\label{thm:rigidity}
Fix $|\beta|<\beta^{*}$ and condition $Z_n$ on $\pi(Z_n)=\lfloor\beta n\rfloor$. Then
\[
\frac{L_n}{n}\ \longrightarrow\ \frac{1+|\beta|}{2+u_r}\ >\ |\beta|,
\qquad
\frac{m_n}{L_n}\ \longrightarrow\ u_r=\frac{r}{r+\rho_r},
\]
in probability, and the convergence is exponentially fast, with rate at least $\delta_\varepsilon(\beta)/2$ where
$\delta_\varepsilon(\beta)=\Psi(\beta)-\sup\{\ell h(\mu/\ell):(\ell,\mu)\in \Delta_\beta,\ |(\ell,\mu)-(\ell_\beta,\mu_\beta)|\ge\varepsilon\}>0$.
Thus throughout the subballistic phase the conditioned element backtracks by the macroscopic amount $\big(\tfrac{1+|\beta|}{2+u_r}-|\beta|\big)n$, with lamp density $u_r$ independent of $\beta$. For $|\beta|>\beta^{*}$ instead $L_n/n\to|\beta|$ and $m_n/L_n\to(1-|\beta|)/|\beta|$.
\end{theorem}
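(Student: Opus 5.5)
By the evenness automorphism in the proof of Theorem \ref{thm:spectrum}, we may assume $\beta\ge0$ and $j_n=\lfloor\beta n\rfloor\ge0$. The conditioned law of $Z_n$ is uniform on the fiber $\{x\in B(n):\pi(x)=j_n\}$, which has cardinality $N_{j_n}(n)$. By Lemma \ref{lem:count}, this fiber splits as a disjoint union of classes indexed by $(e_-,e_+,k)$. Each class is determined by the hull length $L=j_n+e_-+e_+$ and the lamp count $m=k+f$. The plan is to rerun the concentration argument from the proof of Theorem \ref{thm:shape}, this time at fixed $\beta$ rather than jointly over $\beta$, with Proposition \ref{prop:variational} supplying uniqueness of the maximizer $(\ell_\beta,\mu_\beta)$.

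First, fix $\varepsilon>0$ and call a class \emph{bad} if its rescaled point $(L/n,m/n)$ lies at distance at least $\varepsilon$ from $(\ell_\beta,\mu_\beta)$. By continuity of $\beta\mapsto\Delta_\beta$, each such point lies within $O(1/n)$ of $\Delta_{j_n/n}$, hence within $o(1)$ of $\Delta_\beta$. We use the upper half of \eqref{eq:entropy}, uniform continuity of $(\ell,\mu)\mapsto\ell h(\mu/\ell)$, and the fact that the $O(1)$ differences between $L+1-f$ and $L$, and between $k$ and $m$, are absorbed. With these, the total size of the bad classes is at most $(n+1)^2(n+2)r^2e^{n(\Psi(\beta)-\delta_\varepsilon(\beta)+o(1))}$. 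Here $\delta_\varepsilon(\beta)>0$ because $\{(\ell,\mu)\in\Delta_\beta:|(\ell,\mu)-(\ell_\beta,\mu_\beta)|\ge\varepsilon\}$ is compact, the functional is continuous on it, and Proposition \ref{prop:variational} shows the maximizer is unique. The denominator satisfies $N_{j_n}(n)\ge e^{n(\Psi(\beta)-\delta_\varepsilon/4)}$ for large $n$ by Theorem \ref{thm:spectrum}. Taking the quotient, the probability of a bad class is at most $\mathrm{poly}(n)e^{-n\delta_\varepsilon(\beta)/2}$ eventually, which is the claimed exponential rate.

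Second, read off the limits. Concentration of $(L_n/n,m_n/n)$ at $(\ell_\beta,\mu_\beta)$ gives $L_n/n\to\ell_\beta$. Since $\ell_\beta\ge\beta$ and $\ell_\beta>0$ for $\beta<1$, the ratio $m_n/L_n$ converges to $\mu_\beta/\ell_\beta$, which equals $u_r$ in the subballistic case and $(1-\beta)/\beta$ in the ballistic case, by the explicit formulas in Proposition \ref{prop:variational}. For $\beta<\beta^*$ we still need the strict inequality $(1+\beta)/(2+u_r)>\beta$. It is equivalent to $\beta(1+u_r)<1$, that is $\beta<\beta^*$, and the macroscopic backtracking $L_n-|\pi(Z_n)|$ follows.

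The main obstacle is the first step. The uniform comparison between discrete terms and the continuous functional has to hold along $j_n/n\to\beta$ with a constant $\delta_\varepsilon(\beta)$ that does not degrade. I would handle this the same way as in the upper bound of Theorem \ref{thm:spectrum}. Every feasible discrete point lies in $\Delta_{j_n/n}$ up to $O(1/n)$. By Hausdorff continuity of $\beta\mapsto\Delta_\beta$, the supremum of the functional over points of $\Delta_{j_n/n}$ that are $\varepsilon$-far from $(\ell_\beta,\mu_\beta)$ exceeds the value at $\beta$ by at most $o(1)$, after replacing $\varepsilon$ by $\varepsilon/2$ where needed. That perturbation costs only an arbitrarily small fraction of $\delta_\varepsilon$.
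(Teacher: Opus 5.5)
Your proposal is correct and follows the paper's own argument. You bound the $\varepsilon$-far terms of Lemma \ref{lem:count} by $\mathrm{poly}(n)e^{n(\Psi(\beta)-\delta_\varepsilon(\beta)+o(1))}$ using \eqref{eq:entropy} and the uniqueness in Proposition \ref{prop:variational}, and divide by the lower bound on $N_{\lfloor\beta n\rfloor}(n)$ from Theorem \ref{thm:spectrum}. You also spell out the uniformity along $j_n/n\to\beta$ and the reading-off of the limits, which the paper leaves implicit.

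One small precision in that uniformity step: replacing $\varepsilon$ by $\varepsilon/2$ would only give the rate $\delta_{\varepsilon/2}(\beta)/2$. Instead use $\varepsilon-\eta$ with $\eta\downarrow0$, together with the left-continuity of $\varepsilon\mapsto\delta_\varepsilon(\beta)$, which follows by compactness.
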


\begin{proof}
Localization within the fiber: by Proposition \ref{prop:variational} the maximizer of $\Phi$ on $\Delta_\beta$ is unique, so $\delta_\varepsilon(\beta)>0$ by compactness and continuity, and by \eqref{eq:entropy} the terms of Lemma \ref{lem:count} whose rescaled parameters are $\varepsilon$-far from $(\ell_\beta,\mu_\beta)$ contribute at most $\mathrm{poly}(n)e^{n(\Psi(\beta)-\delta_\varepsilon(\beta))}$, against $N_{\lfloor\beta n\rfloor}(n)\ge e^{n(\Psi(\beta)-\delta_\varepsilon(\beta)/2)}$ for $n$ large.
\end{proof}

\begin{proof}[Proof of Proposition \ref{prop:tropical}]
Write $T(\beta)=\min\big(\tfrac{1+|\beta|}3,1-|\beta|\big)$, whose kink is at $|\beta|=\tfrac12$, and note that $T$ is $1$-Lipschitz. From \eqref{eq:cubic}, $\rho_r=\big(r(1+\rho_r/r)\big)^{1/3}$, so $\log\rho_r=\tfrac13\log r+O(r^{-2/3})$, and $u_r=r/(r+\rho_r)=1-O(r^{-2/3})$, whence $\beta^{*}_r=1/(1+u_r)=\tfrac12+O(r^{-2/3})$; also $\bar\beta_r=\omega_r/(2\omega_r-1)\to\tfrac12$ since $\omega_r\to\infty$.

On the affine phase $|\beta|\le\beta^{*}_r$,
$I_r(\beta)/\log r=(1+|\beta|)\log\rho_r/\log r=\tfrac{1+|\beta|}3+O\big(r^{-2/3}/\log r\big)$.
On the ballistic phase $|\beta|\ge\beta^{*}_r$, writing $u=(1-|\beta|)/|\beta|$ and $h(u)=u\log r+H(u)$ with $H(u)=-u\log u-(1-u)\log(1-u)\in[0,\log2]$, we get $I_r(\beta)=|\beta|u\log r+|\beta|H(u)=(1-|\beta|)\log r+O(1)$, so $I_r(\beta)/\log r=1-|\beta|+O(1/\log r)$.

For $|\beta|$ outside the interval between $\beta^{*}_r$ and $\tfrac12$, the branch of $I_r$ and the branch of $T$ agree, and the two displays above give $|I_r(\beta)/\log r-T(\beta)|=O(1/\log r)$. For $|\beta|$ inside that interval, which has length $O(r^{-2/3})$, we have $T(\beta)=T(\tfrac12)+O(r^{-2/3})=\tfrac12+O(r^{-2/3})$ since $T$ is $1$-Lipschitz, while whichever branch formula applies evaluates at $|\beta|=\tfrac12+O(r^{-2/3})$ to $\tfrac12+O(1/\log r)$, both branches of the limit profile taking the value $\tfrac12$ at $\tfrac12$. Hence the difference is $O(1/\log r)$ there as well, and the bound is uniform.
\end{proof}

\section{Problems}\label{sec:questions}

Proposition \ref{prop:tropical} shows that as the lamp group grows the normalized spectrum degenerates to a piecewise-linear profile, with $\beta^{*}_r$ and $\bar\beta_r$ merging at $\tfrac12$. The limiting profile is not itself the spectrum of a group, and the honest limiting object is the wreath product with infinite lamp group.

\begin{question}\label{q:ZwrZ}
Determine the directional growth spectrum of $\Z\wr\Z$ with the standard generators. A lamp of value $c$ now costs $|c|$, so the binary entropy $h$ must be replaced by the exponent governing restricted integer compositions, while the group still has exponential growth. Does the spectrum again consist of exactly two phases, and what equations replace \eqref{eq:cubic} and \eqref{eq:quadratic}?
\end{question}

Among the conclusions of Theorem \ref{thm:transition}, the affine phase is the one that fails in every previously computed example. Two independent descriptions of its mechanism are available, and neither refers to the lamp structure: analytically, by Theorem \ref{thm:series}, it arises from a dominant singularity of $W(s,y)$ that does not move with $y$; combinatorially, by Remark \ref{rem:mechanism}, it arises whenever the marginal entropy of extending an element beyond its target exceeds the marginal cost of traveling there and back. Both suggest that the phenomenon is not peculiar to $L_F$.

\begin{question}\label{q:affine}
Characterize the pairs $(G,\pi)$ whose directional growth spectrum contains an affine piece. Is the presence of an affine phase, or the value of $\beta^{*}$, independent of the finite generating set?
\end{question}

Corollary \ref{cor:gap} produces a co-amenable subgroup whose growth rate is strictly smaller than that of the ambient group, which the criteria of Grigorchuk--Cohen \cite{Grigorchuk80,Cohen82} and Coulon--Dal'bo--Sambusetti \cite{CDS18} forbid in their respective settings. The deficiency $\log(\omega_r/\rho_r)$ is unbounded over the family $L_F$, so no universal bound on such gaps can hold for amenable groups; but a single family says little about how common the phenomenon is.

\begin{question}\label{q:gap}
Which amenable groups of exponential growth admit co-amenable subgroups of strictly smaller exponential growth rate? Is there an intrinsic description of the deficiency, comparable to the interpretation of the vanishing gap as co-amenability in negative curvature?
\end{question}

Near $\pm\bar\beta$ the spectrum is strictly concave and real-analytic, Corollary \ref{cor:concentration} supplies the curvature $(1+4r)^{3/2}/r$ of the rate function there, and Theorem \ref{thm:series} supplies a rational generating function in two variables. In the hyperbolic setting these are precisely the ingredients from which local limit theorems are extracted \cite{Sharp01}. Note that $\pi(Z_n)/n$ concentrates on the two points $\pm\bar\beta$, so any such statement must be made after conditioning on the sign of $\pi(Z_n)$.

\begin{question}\label{q:clt}
Conditioned on $\pi(Z_n)>0$, does $\pi(Z_n)$ satisfy a central limit theorem, and a local limit theorem, about $\bar\beta n$ with variance asymptotic to $n\,r(1+4r)^{-3/2}$?
\end{question}


\begin{thebibliography}{99}

\bibitem{Bartholdi17}
L. Bartholdi,
\emph{Growth of groups and wreath products},
in: Groups, Graphs and Random Walks, London Math. Soc. Lecture Note Ser. \textbf{436}, Cambridge Univ. Press, Cambridge, 2017, 1--76.

\bibitem{Bodart22}
C. Bodart,
\emph{Dead ends and rationality of complete growth series},
preprint, arXiv:2210.07868.

\bibitem{BT17}
M. Bucher, A. Talambutsa,
\emph{Minimal exponential growth rates of metabelian Baumslag--Solitar groups and lamplighter groups},
Groups Geom. Dyn. \textbf{11} (2017), 189--209.

\bibitem{Cohen82}
J. M. Cohen,
\emph{Cogrowth and amenability of discrete groups},
J. Funct. Anal. \textbf{48} (1982), 301--309.

\bibitem{CDS18}
R. Coulon, F. Dal'bo, A. Sambusetti,
\emph{Growth gap in hyperbolic groups and amenability},
Geom. Funct. Anal. \textbf{28} (2018), 1260--1320.

\bibitem{DLM12}
M. Duchin, S. Leli\`evre, C. Mooney,
\emph{The geometry of spheres in free abelian groups},
Geom. Dedicata \textbf{161} (2012), 169--187.

\bibitem{GTT20}
I. Gekhtman, S. J. Taylor, G. Tiozzo,
\emph{Counting problems in graph products and relatively hyperbolic groups},
Israel J. Math. \textbf{237} (2020), 311--371.

\bibitem{Grigorchuk80}
R. I. Grigorchuk,
\emph{Symmetrical random walks on discrete groups},
in: Multicomponent Random Systems, Adv. Probab. Related Topics \textbf{6}, Dekker, New York, 1980, 285--325.

\bibitem{Grigorchuk84}
R. I. Grigorchuk,
\emph{Degrees of growth of finitely generated groups and the theory of invariant means},
Izv. Akad. Nauk SSSR Ser. Mat. \textbf{48} (1984), 939--985.

\bibitem{Gromov81}
M. Gromov,
\emph{Groups of polynomial growth and expanding maps},
Inst. Hautes \'Etudes Sci. Publ. Math. \textbf{53} (1981), 53--73.

\bibitem{dlH00}
P. de la Harpe,
\emph{Topics in Geometric Group Theory},
Chicago Lectures in Math., Univ. of Chicago Press, Chicago, 2000.

\bibitem{LPP96}
R. Lyons, R. Pemantle, Y. Peres,
\emph{Random walks on the lamplighter group},
Ann. Probab. \textbf{24} (1996), 1993--2006.

\bibitem{Parry92}
W. Parry,
\emph{Growth series of some wreath products},
Trans. Amer. Math. Soc. \textbf{331} (1992), 751--759.

\bibitem{PSC02}
C. Pittet, L. Saloff-Coste,
\emph{On random walks on wreath products},
Ann. Probab. \textbf{30} (2002), 948--977.

\bibitem{Rivin10}
I. Rivin,
\emph{Growth in free groups (and other stories)---twelve years later},
Illinois J. Math. \textbf{54} (2010), 327--370.

\bibitem{Sharp01}
R. Sharp,
\emph{Local limit theorems for free groups},
Math. Ann. \textbf{321} (2001), 889--904.

\end{thebibliography}
\end{document}